\newcommand{\cE}{{\mathscr E}}
\newcommand{\cF}{{\mathscr F}}
\newcommand{\cN}{{\mathscr N}}
\newcommand{\cO}{{\mathscr O}}
\newcommand{\cP}{{\mathscr P}}  
\newcommand{\cR}{{\mathscr R}}
\newcommand{\cS}{{\mathscr S}}
\newcommand{\cT}{{\mathscr T}}
\newcommand{\cZ}{{\mathscr Z}}
\newcommand{\es}{\emptyset}
\newcommand{\gz}{\mathfrak{z}}
\newcommand{\hra}{\hookrightarrow}
\newcommand{\KK}{{\mathbb K}}
\newcommand{\la}{\langle}
\newcommand{\lra}{\longrightarrow}
\newcommand{\ov}{\overline}
\newcommand{\PP}{{\mathbb P}}
\newcommand{\QQ}{{\mathbb Q}}
\newcommand{\ra}{\rangle}
\newcommand{\ZZ}{{\mathbb Z}}
\theoremstyle{plain}
\newtheorem{thm}{Theorem}[section]
\newtheorem{clm}[thm]{Claim}
\newtheorem{cnj}[thm]{Conjecture}
\newtheorem{crl}[thm]{Corollary}
\newtheorem{prp}[thm]{Proposition}
\newtheorem{prp-dfn}[thm]{Proposition-Definition}
\theoremstyle{definition}
\newtheorem{dfn}[thm]{Definition}
\theoremstyle{remark}
\newtheorem{expl}[thm]{Example}
\newtheorem{rmk}[thm]{Remark}
\DeclareMathOperator{\alb}{alb}
\DeclareMathOperator{\Alb}{Alb}
\DeclareMathOperator{\Aut}{Aut}
\DeclareMathOperator{\CH}{CH}
\DeclareMathOperator{\cod}{cod}
\DeclareMathOperator{\im}{Im}
\DeclareMathOperator{\Pic}{Pic}
\DeclareMathOperator{\Top}{Top}
\newcommand{\cit}[1]{{\rm \textbf{#1}}}
\newcommand{\Ref}[2]{\cit{%
\ifthenelse{\equal{#1}{thm}}{Theorem}{}%
\ifthenelse{\equal{#1}{ass}}{Assumption}{}%
\ifthenelse{\equal{#1}{chp}}{Chapter}{}%
\ifthenelse{\equal{#1}{prp}}{Proposition}{}%
\ifthenelse{\equal{#1}{lmm}}{Lemma}{}%
\ifthenelse{\equal{#1}{cnj}}{Conjecture}{}%
\ifthenelse{\equal{#1}{crl}}{Corollary}{}%
\ifthenelse{\equal{#1}{dfn}}{Definition}{}%
\ifthenelse{\equal{#1}{expl}}{Example}{}%
\ifthenelse{\equal{#1}{hyp}}{Hypothesis}{}%
\ifthenelse{\equal{#1}{rmk}}{Remark}{}%
\ifthenelse{\equal{#1}{clm}}{Claim}{}%
\ifthenelse{\equal{#1}{exe}}{Exercise}{}%
\ifthenelse{\equal{#1}{qst}}{Question}{}%
\ifthenelse{\equal{#1}{sec}}{Section}{}%
\ifthenelse{\equal{#1}{subsec}}{Subsection}{}%
\ifthenelse{\equal{#1}{subsubsec}}{Subsubsection}{}%
\ifthenelse{\equal{#1}{univ}}{Universal Property}{}%
\ifthenelse{\equal{#1}{trm}}{Terminology}{}%
\ifthenelse{\equal{#1}{tbl}}{Table}{}%
\  \ref{#1:#2}%
}}
\begin{document}
 \title{Computations with modified diagonals}
 \author{Kieran G. O'Grady\\\\
\lq\lq Sapienza\rq\rq Universit\`a di Roma}
\dedicatory{Alla piccola Titti}
\date{March 23 2014}
\thanks{Supported by PRIN 2010}
\begin{abstract}
Motivated by conjectures of Beauville and Voisin on the Chow ring of Hyperk\"ahler varieties we will prove some basic results on the rational equivalence class of modified diagonals of projective varieties.

\smallskip
\noindent
\textbf{Key Words:} Chow ring, Hyperk\"ahler varieties, modified diagonals.

\smallskip
\noindent
\textbf{Mathematics Subject Classification:} 14C25, 14J28.
\end{abstract}
\maketitle
\tableofcontents
\section{Introduction}
Let $X$ be an $n$-dimensional  variety over a field $\KK$ and $a\in X(\KK)$.  For $I\subset\{1,\ldots,m\}$ we let
\begin{equation}\label{diagtwist}
\Delta^m_I(X;a):=\{(x_1,\ldots,x_m)\in X^m \mid \text{$x_i=x_j$ if $i,j\in I$ and $x_i=a$ if $i\notin I$}\}.
\end{equation}
 The \emph{$m$-th modified diagonal cycle associated to $a$} is the  $n$-cycle  on $X^m$ given by
\begin{equation}\label{eccogamma}
\Gamma^m(X;a):=\sum\limits_{\es\not= I\subset \{1,2,\ldots,m\}}(-1)^{m-|I|}\Delta^m_I(X;a)
\end{equation}
if $n>0$, and equal to $0$ if $n=0$. Gross and Schoen~\cite{groscho} proved that if $X$ is a  (smooth projective) hyperelliptic curve   and $a$ is a fixed point of a hyperelliptic involution then 
 $\Gamma^3(X;a)$ represents a torsion class in the Chow group of $X^3$.  On the other hand it is known that if $X$ is a generic complex smooth plane  curve and   $m$ is small compared to its genus then $\Gamma^m(X;a)$ is \emph{not} algebraically equivalent to $0$, whatever $a$ is, 
 see~\cite{voisinf} (for the link between vanishing of  $\Gamma^m(X;a)$ and Voisin's result on the Beauville decomposition of the Abel-Jacobi image of a curve see the proof of Prop.4.3 of~\cite{beauvoisin}).
Let $X$ be a complex projective $K3$ surface: Beauville and Voisin~\cite{beauvoisin} have proved that   there exists $c\in X$ such that the   
 rational equivalence class of $\Gamma^3(X;c)$ is torsion.
  A natural question arises: under which hypotheses a modified diagonal cycle on a projective variety represents a torsion class  in the Chow group?  We should point out that such a vanishing can entail unexpected geometric properties:  if $X$ is a smooth projective variety of dimension $n$ and  $\Gamma^{n+1}(X;a)$ is torsion in the Chow group then the intersection of arbitrary divisor classes $D_1,\ldots,D_{n}$ on $X$ is rationally equivalent to a multiple of $a$. A set of conjectures put forth by Beauville~\cite{beauconj} and Voisin~\cite{voisinhk} predict exactly such  a degenerate behaviour for the intersection product of divisors on hyperk\"ahler varieties i.e.~complex smooth projective varieties which are simply connected and carry a holomorphic symplectic form whose cohomology class  spans $H^{2,0}$ (see~\cite{liefu,shenvial} for more results on those conjectures). Our interest in modified diagonals has been motivated by the desire to prove the  conjecture on hyperk\"ahler varieties stated below. From now on the notation   $A\equiv B$  for cycles $A,B$ on a variety $X$   means that for some integer $d\not=0$ the cycle  $dA$ is rationally equivalent to $dB$, i.e.~we will work with the rational Chow group $\CH(X)_{\QQ}:=\CH(X)\otimes_{\ZZ}\QQ$.
\begin{cnj}\label{cnj:diaghk}
Let $X$ be a Hyperk\"ahler variety of dimension $2n$. Then there exists $a\in X$ such that $\Gamma^{2n+1}(X;a)\equiv 0$. 
\end{cnj}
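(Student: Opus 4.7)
The first thing I would observe is that the statement is automatic at the level of cohomology. By a direct K\"unneth computation, for any smooth projective variety $X$ of complex dimension $d$ and any rational point $a\in X$, the class $[\Gamma^m(X;a)]\in H^{2(m-1)d}(X^m,\QQ)$ pairs non-trivially with a tensor $\beta_1\otimes\cdots\otimes\beta_m\in H^{2d}(X^m,\QQ)$ only when each $\beta_i$ lies in $H^{\ge 1}(X,\QQ)$, in which case the pairing reduces to $\int_X\beta_1\cup\cdots\cup\beta_m$. For $X$ hyperk\"ahler of dimension $d=2n$ the vanishing $H^1(X,\QQ)=0$ forces $\deg\beta_i\ge 2$; since $m=2n+1$ factors each of degree $\ge 2$ sum to at least $4n+2>4n=2d$, no such tensor exists and $[\Gamma^{2n+1}(X;a)]=0$ for every $a$. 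The conjecture is therefore asking to lift this cohomological identity to rational equivalence, a typical Bloch-Beilinson prediction.

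My plan would be to approach the vanishing as a consequence of the Beauville-Voisin picture for the Chow ring of hyperk\"ahler varieties. First I would rewrite the modified diagonal as an iterated product: a direct inclusion-exclusion gives
\[
\sum_{1\in I\subseteq\{1,\ldots,m\}}(-1)^{m-|I|}\Delta^m_I \;=\; \prod_{i=2}^m\bigl([\Delta_{1i}]-p_i^*[a]\bigr)\in\CH^{(m-1)n}(X^m)_\QQ,
\]
and since the remaining terms (those with $1\notin I$) contribute $-\{a\}\times\Gamma^{m-1}(X;a)$, one obtains the recursion $\Gamma^m(X;a)=\prod_{i=2}^m([\Delta_{1i}]-p_i^*[a])-\{a\}\times\Gamma^{m-1}(X;a)$. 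Iterating shows that $\Gamma^{2n+1}(X;a)$ is an explicit polynomial expression in the classes $[\Delta_{ij}]$ and $p_i^*[a]$ on $X^{2n+1}$.

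Next, I would pick $a$ to be the conjectural ``Beauville-Voisin point'' of $X$, i.e.\ a zero-cycle lying in the subring $R(X)\subseteq\CH(X)_\QQ$ generated by divisors and Chern classes of $T_X$. Assuming a multiplicative Chow-K\"unneth decomposition for $X$ in the sense of Beauville, with $[a]$ in the weight-zero piece of the induced bigrading, the class $\Gamma^{2n+1}(X;a)$ decomposes into bigraded pieces that one should be able to analyse individually. The Beauville-Voisin injectivity conjecture --- that the weight-zero piece of $\CH(Y)_\QQ$ injects into cohomology for $Y$ a product of hyperk\"ahler varieties --- combined with the cohomological vanishing of the first paragraph would then force the weight-zero part of $\Gamma^{2n+1}(X;a)$ to vanish; the other pieces should be controlled by the multiplicativity of the decomposition and the fact that $[a]$ belongs to $R(X)$.

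The main obstacle is precisely this package of conjectures: the existence of a multiplicative Chow-K\"unneth decomposition, and the injectivity of its weight-zero piece into cohomology, are open in full generality and known only in a handful of examples (Hilbert schemes of points on $K3$ surfaces, generalised Kummer varieties, the Fano variety of lines on a cubic fourfold, and a few others). In those examples one can hope to extract the vanishing of $\Gamma^{2n+1}(X;a)$ by imitating Beauville-Voisin's original argument for $K3$s, exploiting the abundance of rational curves or constant-cycle Lagrangian subvarieties to produce the required rational equivalences on $X^{2n+1}$; in full generality the problem reduces to the Beauville-Voisin/Bloch-Beilinson circle of conjectures, and I would not expect an unconditional proof without substantial new geometric input.
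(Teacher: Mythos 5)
You should first be aware that this statement is a \emph{conjecture}: the paper states explicitly that it does not prove \Ref{cnj}{diaghk}, and to date it is open in general. Your proposal, as you yourself concede in the last paragraph, is not a proof either --- it is a reduction to the existence of a multiplicative Chow--K\"unneth decomposition for $X$ together with the injectivity into cohomology of the weight-zero piece of $\CH(X^{2n+1})_{\QQ}$. Those inputs are themselves open conjectures of the Beauville--Voisin/Bloch--Beilinson type, and they are at least as strong as the statement to be proved; so the third step of your plan is where the argument genuinely stops. (A smaller caveat: even granting a multiplicative Chow--K\"unneth decomposition with $[a]$ of weight zero, one still has to check that \emph{every} monomial in the $[\Delta_{1i}]$ and $p_i^{*}[a]$ appearing in your product expansion lands in the weight-zero part of $\CH(X^{2n+1})_{\QQ}$, which requires knowing that the small and partial diagonals are themselves of pure weight zero --- this is an additional hypothesis, not a formal consequence of multiplicativity.) The parts of your proposal that are correct and unconditional are the cohomological vanishing --- which agrees with \Ref{prp}{banale}, since a hyperk\"ahler variety has $H^1=0$ and hence Albanese dimension $d=0$, so $[\Gamma^m(X;a)]$ is torsion in homology exactly for $m>2n$ --- and the Gross--Schoen-type identity expressing $\sum_{1\in I}(-1)^{m-|I|}\Delta^m_I$ as $\prod_{i=2}^m([\Delta_{1i}]-p_i^{*}[a])$.

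What the paper actually does is orthogonal to your plan: it develops unconditional tools (behaviour of modified diagonals under products, \Ref{prp}{protokunn}; blow-ups along smooth centres, \Ref{prp}{blowdel}; $\PP^r$-fibrations; double covers) and uses them to verify the conjecture in specific cases --- abelian surfaces, symmetric products of low-genus curves, and $S^{[n]}$ for $S$ a $K3$ surface and $n=2,3$ (\Ref{prp}{diaghilbk3}). If you want to make progress on concrete cases, that toolbox (or the Moonen--Yin theorem for abelian schemes) is the route the paper suggests, rather than the conditional Chow--K\"unneth reduction.
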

In the present paper we will \emph{not} prove~\Ref{cnj}{diaghk}, instead we will establish a few basic results on modified diagonals. 
Below is our first result, see~\Ref{sec}{prodotti}.
\begin{prp}\label{prp:protokunn}
Let $X,Y$ be smooth projective varieties. Suppose that  there exist $a\in X(\KK)$, $b\in Y(\KK)$ such that  $\Gamma^m(X;a)\equiv 0$  and  $\Gamma^n(Y;b)\equiv 0$. Then  $\Gamma^{m+n-1}(X\times Y;(a,b))\equiv 0$.
\end{prp}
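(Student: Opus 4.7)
The plan is to use the canonical isomorphism $(X\times Y)^{m+n-1}\cong X^{m+n-1}\times Y^{m+n-1}$, under which $\Delta^{m+n-1}_I(X\times Y;(a,b))=\Delta^{m+n-1}_I(X;a)\times\Delta^{m+n-1}_I(Y;b)$ for every nonempty $I\subset\{1,\ldots,m+n-1\}$, so that
\[
\Gamma^{m+n-1}(X\times Y;(a,b))=\sum_{\emptyset\neq I\subset\{1,\ldots,m+n-1\}}(-1)^{m+n-1-|I|}\,\Delta^{m+n-1}_I(X;a)\times\Delta^{m+n-1}_I(Y;b),
\]
a sum in $\CH((X\times Y)^{m+n-1})_{\QQ}$ of Künneth products of cycles on $X^{m+n-1}$ with cycles on $Y^{m+n-1}$.

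Next, I would extend the two hypotheses to $X^{m+n-1}$ and $Y^{m+n-1}$ respectively. For each $S\subset\{1,\ldots,m+n-1\}$ with $|S|=m$, taking Künneth product of $\Gamma^m(X;a)$ with the $0$-cycle $[(a,\ldots,a)]\in\CH(X^{n-1})_{\QQ}$, placed on the coordinates in $\{1,\ldots,m+n-1\}\setminus S$, yields
\[
\Gamma^m_S(X;a):=\sum_{\emptyset\neq I\subset S}(-1)^{m-|I|}\,\Delta^{m+n-1}_I(X;a)\equiv 0\text{ in }\CH(X^{m+n-1})_{\QQ},
\]
and symmetrically $\Gamma^n_T(Y;b)\equiv 0$ in $\CH(Y^{m+n-1})_{\QQ}$ for every $T$ with $|T|=n$. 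Consequently, every Künneth product $\Gamma^m_S(X;a)\times C$ with $C\in\CH(Y^{m+n-1})_{\QQ}$, or $D\times\Gamma^n_T(Y;b)$ with $D\in\CH(X^{m+n-1})_{\QQ}$, is $\equiv 0$ in $\CH((X\times Y)^{m+n-1})_{\QQ}$.

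The core of the argument is then to write $\Gamma^{m+n-1}(X\times Y;(a,b))$ as a $\QQ$-linear combination of such Künneth products. The key combinatorial fact is that any two subsets $S,T\subset\{1,\ldots,m+n-1\}$ with $|S|=m$, $|T|=n$ must overlap, since $|S|+|T|=m+n>m+n-1$, and this is precisely what forces the index $m+n-1$ (rather than $m+n$) in the statement. Concretely, the strategy is to use $\Gamma^m_S(X;a)\equiv 0$ iteratively to reduce each $\Delta^{m+n-1}_I(X;a)$ with $|I|\geq m$, modulo $\equiv$, to a combination of $\Delta^{m+n-1}_J(X;a)$ with $|J|\leq m-1$; for $|I|=m$ this is immediate from $\Gamma^m_I(X;a)\equiv 0$, and for $|I|>m$ one proceeds by writing the small diagonal of $X^{|I|}$ as an intersection of partial diagonals in the Chow ring and applying the previously established reductions to each factor. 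Symmetric reductions should hold for $Y$, and after substituting them into the expansion of $\Gamma^{m+n-1}(X\times Y;(a,b))$ the sum should telescope to zero.

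The main obstacle is the combinatorial verification of this cancellation for general $m,n$, in particular the step of reducing $\Delta^{m+n-1}_I(X;a)$ with $|I|>m$, which requires an iterated (not single-application) use of the $\Gamma^m_S$ relations. The case $m=n=2$ is a worked-out model: from $\Gamma^2(X;a)\equiv 0$ one has $\Delta^3_{\{i,j\}}(X;a)\equiv\Delta^3_{\{i\}}(X;a)+\Delta^3_{\{j\}}(X;a)$, and the derived relation $\Delta^3_{\{1,2,3\}}(X;a)\equiv\sum_{i=1}^3\Delta^3_{\{i\}}(X;a)$ is obtained by expressing the small diagonal of $X^3$ as the transverse intersection of the two partial diagonals $\Delta_X\times X$ and $X\times\Delta_X$ and substituting the first set of reductions into each factor; the analogous reductions for $Y$ then force a direct cancellation in $\Gamma^3(X\times Y;(a,b))$.
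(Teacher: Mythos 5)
Your setup is exactly the paper's: identify $(X\times Y)^{e}$ with $X^{e}\times Y^{e}$ for $e=m+n-1$, write $\Gamma^{e}(X\times Y;(a,b))=\sum_I(-1)^{e-|I|}\Delta^e_I(X;a)\times\Delta^e_I(Y;b)$, push the hypotheses forward to relations among partial diagonals in $X^e$ and $Y^e$, reduce every $\Delta^e_I(X;a)$ with $|I|\ge m$ (resp.\ $\Delta^e_I(Y;b)$ with $|I|\ge n$) to a combination of $\Delta^e_J$ with $|J|\le m-1$ (resp.\ $|K|\le n-1$), and then check that all coefficients cancel. This is the same route the paper takes, so there is no divergence of strategy to report.

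The genuine gap is precisely the step you defer: the cancellation is not a telescope and does not follow from the observation that $|S|+|T|>e$ forces $S\cap T\neq\emptyset$. To carry it out you need, first, the \emph{explicit} coefficients in the reduction, namely
\[
\Delta^{e}_I(X;a)\equiv \sum_{\substack{J\subset I \\ 1\le |J|\le m-1}}(-1)^{m-1-|J|}\binom{|I|-|J|-1}{|I|-m}\Delta^{e}_J(X;a)\qquad(|I|\ge m),
\]
which the paper obtains by induction on $|I|-m$, intersecting with a partial diagonal $\pi^{*}_{e-1,e}\Delta^2(X)$ at each step and tracking how the subsets $J$ and the new subsets $K$ with $|K|=m$ recombine; your iterated use of the relations $\Gamma^m_S\equiv 0$ is the right idea but the resulting binomial coefficients are the whole content. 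Second, after substituting both reductions, the coefficient $c_{J,K}(t)$ of $\Delta^e_J(X;a)\times\Delta^e_K(Y;b)$ coming from the $|I|=t$ stratum is a product of two binomial coefficients, polynomial in $t$ of degree $(m-1-|J|)+(n-1-|K|)$, times $\binom{e-|J\cup K|}{t-|J\cup K|}$; the alternating sum $\sum_t(-1)^tc_{J,K}(t)$ vanishes because of the identity $\sum_{t=0}^N(-1)^tp(t)\binom{N}{t}=0$ for $\deg p<N$, and the needed inequality $m+n-2-|J|-|K|<e-|J\cup K|$ is where $e=m+n-1$ actually enters. Moreover this argument requires a case analysis (the paper distinguishes four cases according to whether $J\subset K$, $J=K$, and whether $|K|\ge m$), because for $|I|<n$ only one factor reduces and extra boundary terms appear that must be matched against the polynomial identity. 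None of this is routine, and your $m=n=2$ model (where all binomial coefficients are $1$) gives no indication of how the general case closes; as written, the proof is incomplete at its decisive step.
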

We will  apply the above proposition in order to show that if $T$ is a complex  abelian surface and $a\in T$ then  $\Gamma^5(T;a)\equiv 0$. Notice that if $E$ is an elliptic curve and $a\in E$ then     $\Gamma^3(E;a)\equiv 0$ by Gross and Schoen~\cite{groscho}. These results are particular instances of a Theorem of Moonen and Yin~\cite{moy} which asserts that  $\Gamma^{2g+1}(A;p)\equiv 0$ for $A$  an abelian variety  of dimension $g$ and $p\in A(\KK)$ (and more generally for an abelian scheme of relative dimension $g$).
A word about the relation between   Moonen - Yin's result and~\Ref{cnj}{diaghk}. Beauville and Voisin  proved that the relation $\Gamma^3(X;c)\equiv 0$  for $X$ a complex projective  $K3$ surface  (and a certain $c\in X$)  follows from the existence of an elliptic surface $Y$ dominating $X$ and the relation $\Gamma^3(E_t;a)\equiv 0$ for the fibers of the elliptic fibration on $Y$.   We expect that the theorem of Moonen and Yin can be used to prove 
that~\Ref{cnj}{diaghk} holds for Hyperk\"ahler varieties which are covered generically by abelian varieties, this is the subject of work in progress. (It is hard to believe that every Hyperk\"ahler variety of dimension greater than $2$ is covered generically by abelian varieties, but certainly there are  interesting codimension-$1$ families which have this property, viz.~lagrangian fibrations and Hilbert schemes of $K3$ surfaces, moreover Lang's conjectures on hyperbolicity would give that a hyperk\"ahler variety is generically covered by varieties birational to abelian varieties.) In~\Ref{sec}{fibrazioni} we will prove that, in a certain sense, \Ref{prp}{protokunn} holds also for $\PP^r$ fibrations over smooth projective varieties if  certain hypotheses are satisfied, then we will apply the result to prove vanishing of classes of modified diagonals of symmetric products of curves of genus at most $2$. In~\Ref{sec}{scoppio} we will prove 
 the following result.
\begin{prp}\label{prp:blowdel}
Let $Y$ be a smooth projective variety  and $V\subset Y$ be  a smooth subvariety of codimension $e$. Suppose that there exists $b\in V(\KK)$ such that $\Gamma^{n+1}(Y;b)\equiv 0$ and $\Gamma^{n-e+1}(V;b)\equiv 0$. Let $X\to Y$ be the blow-up of $V$ and $a\in X(\KK)$ such that $f(a)=b$. Then $\Gamma^{n+1}(X;a)\equiv 0$.
\end{prp}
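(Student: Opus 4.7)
The plan is to pull back the relation $\Gamma^{n+1}(Y;b)\equiv 0$ via the LCI morphism $f^{n+1}\colon X^{n+1}\to Y^{n+1}$ and compare the result with $\Gamma^{n+1}(X;a)$. The refined pullback $(f^{n+1})^*$ is defined on $\CH(Y^{n+1})_{\QQ}$, and the hypothesis yields $(f^{n+1})^*\Gamma^{n+1}(Y;b)\equiv 0$. A basic ingredient is the equality $f^*[b]=[a]$ in $\CH_0(X)_{\QQ}$: since $f^{-1}(b)\cong\PP^{e-1}$, all its points are rationally equivalent, and any of them represents $f^*[b]$.

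For $I\subset\{1,\ldots,n+1\}$ of size $k$, writing $I^c:=\{1,\ldots,n+1\}\setminus I$, the cycle $\Delta^{n+1}_I(Y;b)$ is the external product $\Delta_Y^k\times\{b\}^{I^c}$, so $(f^{n+1})^*\Delta^{n+1}_I(Y;b)=(f^k)^*[\Delta_Y^k]\times[a]^{I^c}$. For $k=1$ this equals $\Delta^{n+1}_I(X;a)$, while for $k\geq 2$ the excess intersection formula, applied to the Cartesian square resolving $(f^k)^{-1}(\Delta_Y^k)=\Delta_X^k\cup E^{k/V}$ (with $E^{k/V}:=E\times_V\cdots\times_V E$, $k$ factors), gives
\[(f^k)^*[\Delta_Y^k]=[\Delta_X^k]+(j_k)_*\bigl(c_{\mathrm{top}}(\cE_k)\cap[E^{k/V}]\bigr),\]
for an excess normal bundle $\cE_k$ on $E^{k/V}$. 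Summing over $I$ with signs $(-1)^{n+1-|I|}$ and using $(f^{n+1})^*\Gamma^{n+1}(Y;b)\equiv 0$ one obtains
\[\Gamma^{n+1}(X;a)\equiv -\sum_{k=2}^{n+1}(-1)^{n+1-k}\sum_{|I|=k}(\iota_I)_*\bigl((c_{\mathrm{top}}(\cE_k)\cap[E^{k/V}])\times[a]^{I^c}\bigr),\]
so the proof reduces to showing this correction, supported on the strata $E^{k/V}\times\{a\}^{I^c}\subset X^{n+1}$, is $\equiv 0$.

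Since $g\colon E\to V$ is a $\PP^{e-1}$-bundle via $E=\PP(N_{V/Y})$, the fibre product $E^{k/V}$ is a $(\PP^{e-1})^k$-bundle over $V$, and $\cE_k$ is a quotient of $g^*N_{V/Y}$ by tautological line bundles on each factor. Applying the projective-bundle formula for the push-forward along $g^{(k)}\colon E^{k/V}\to V$, the class $c_{\mathrm{top}}(\cE_k)\cap[E^{k/V}]$ becomes an expression in terms of the small diagonal $\Delta_V^k\subset V^k$ twisted by tautological classes; after summing over all $I$ with the alternating signs, the $V$-part organises into the modified diagonal $\Gamma^{n-e+1}(V;b)$ multiplied by a class in $\CH((\PP^{e-1})^{n+1})_{\QQ}$. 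The hypothesis $\Gamma^{n-e+1}(V;b)\equiv 0$ then forces the correction to vanish, yielding $\Gamma^{n+1}(X;a)\equiv 0$. The principal obstacle is this final step: identifying $\cE_k$ explicitly, carrying out the projective-bundle push-forward, and verifying that the alternating combinatorial sum indeed collapses into a $V$-modified-diagonal factor; one expects to leverage the $\PP^r$-fibration extension of \Ref{prp}{protokunn} developed in \Ref{sec}{fibrazioni}, together with the elementary vanishing $\Gamma^e(\PP^{e-1};\mathrm{pt})\equiv 0$.
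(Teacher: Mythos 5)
Your setup is the same as the paper's: pull back $\Gamma^{n+1}(Y;b)$ along $f^{n+1}$, apply excess intersection to the component $(E/V)^{k}$ of $(f^{k})^{-1}(\Delta^{k}(Y))$, use $f^{*}[b]=[a]$, and reduce to showing that a correction term supported on $E^{n+1}$ vanishes. Up to that point the proposal is sound (modulo the degenerate case $k=e=2$, where $(E/V)^{2}$ has the expected dimension and the coefficient must be checked by hand, as the paper does). The gap is precisely the step you flag as the ``principal obstacle'', and your guess at how it resolves is not correct as stated: the correction term does \emph{not} organise into $\Gamma^{n-e+1}(V;b)$ (or even $\Gamma^{n+1}(V;b)$) multiplied by a fixed class on the fibres. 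After expanding the excess Chern class $c_{d(k)}\bigl(\bigoplus_{j\in I}Q_j\bigr)$ by Whitney's formula, the correction becomes a sum, over multi-indices $K=(k_1,\dots,k_{n+1})$, of terms $(g^{n+1})^{*}\Delta_I(V;b)\cdot\prod_s c_{k_s}(Q_s)$, and the set of $I$ contributing to a fixed monomial $K$ depends on $K$ (one needs $I^c$ contained in $T(K)=\{s\mid k_s=e-1\}$). So for fixed $K$ the $V$-part is a truncated, $K$-dependent alternating sum of the $\Delta_I(V;b)$, which is not an external product of a modified diagonal of $V$ with a class on $(\PP^{e-1})^{n+1}$; in particular a Künneth-type factorisation argument cannot be run.

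What the paper actually does at this point is: use $\Gamma^{n-e+1}(V;b)\equiv 0$ through \Ref{crl}{delsup} to rewrite every $\Delta_I(V;b)$ with $|I|>n-e$ as a binomial-weighted combination of $\Delta_J(V;b)$ with $1\le|J|\le n-e$, compute the resulting coefficient $c_{J,K}(t)$ of each basis class $(g^{n+1})^{*}\Delta_J(V;b)\cdot\prod_s c_{k_s}(Q_s)$ explicitly as a product of two binomial coefficients, and verify $\sum_{t}(-1)^{t}c_{J,K}(t)=0$ via the identity~\eqref{combcomb}, which applies only because the degree constraint on $K$ forces $|T(K)\cap J^c|\ge n-|J|-e+1$. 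This combinatorial verification is the substance of the proof and is absent from your proposal. Your suggested alternative route --- the $\PP^r$-fibration results of \Ref{sec}{fibrazioni} together with $\Gamma^{e}(\PP^{e-1};\mathrm{pt})\equiv 0$ --- does not apply here: those results concern a projective bundle over the whole base with the modified diagonal of the base appearing untwisted, whereas here the relevant cycles live on fibre products over the small diagonal of $V$ and carry $I$-dependent Chern-class weights.
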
   
We will apply~\Ref{prp}{blowdel} and~\Ref{prp}{protokunn} in order to show that~\Ref{cnj}{diaghk} holds for $S^{[n]}$ where $S$ is a complex $K3$ surface and $n=2,3$, see~\Ref{prp}{diaghilbk3}. In~\Ref{sec}{rivdop} we will consider double covers $f\colon X\to Y$ where $X$ is a projective variety. We will prove that if $a\in X(\KK)$ is a  ramification point and $\Gamma^m(Y;f(a))\equiv 0$  then $\Gamma^{2m-1}(X;a)\equiv 0$, provided $m=2,3$. The proof for $m=2$ is  the proof, given by Gross and Schoen,  that if $X$ is a   hyperelliptic curve    then 
 $\Gamma^3(X;a)\equiv 0$ for  $a\in X(\KK)$  a fixed point of a hyperelliptic involution; we expect that our extension will work for arbitrary $m$ but we have not been able to carry out the necessary linear algebra computations. The  result for $m=3$ allows us to give another proof that
   $\Gamma^5(T;a)\equiv 0$ for a complex abelian surface $T$: the equality $\Gamma^5(T;a)\equiv 0$ follows from our result on double covers and  the equality  $\Gamma^3(T/\la -1\ra;c)\equiv 0$ proved by  Beauville and Voisin~\cite{beauvoisin}. 
\subsection{Conventions and notation}
Varieties are defined over a base field $\mathbb K$. A \emph{point of $X$} is an element of $X(\KK)$.   
We denote the small diagonal $\Delta^m_{\{1,\ldots,m\}}(X;a)$  by $\Delta^m(X)$ and we let $\pi^m_i\colon X^m\to X$ be the $i$-th projection - we will drop the superscript $m$ if there is no potential for confusion. We let $X^{(n)}$ be the $n$-th symmetric product of $X$ i.e.~$X^{(n)}:=X^n/{\mathcal S}_n$ where ${\mathcal S}_n$ is the symmetric group on $n$ elements.
\subsection{Acknowledgments}
It is a pleasure to thank Lie Fu, Ben Moonen and Charles Vial for the interest they took in this work.
\section{Preliminaries}
\subsection{}\label{subsec:significato}
\setcounter{equation}{0}
Let $X$ be an $n$-dimensional  projective variety over a field $\KK$, $a\in X(\KK)$ and   $h$  a hyperplane class on $X$. Let $\iota\colon\Delta^m(X)\hra X^m$ be the inclusion map. If $m\le n$ then
\begin{equation}\label{iperpiano}
\Gamma^{m}(X;a)\cdot\pi_1^{*}(h)\cdot\pi_2^{*}(h)\cdot\ldots\cdot\pi_{m-1}^{*}(h)\cdot\pi_{m}(h^{n-m+1})=
\iota_{*}(h^n).
\end{equation}
Since $\deg\iota_{*}(h^n)\not=0$ it follows that $\Gamma^{m}(X;a)\not\equiv 0$ if $m\le n$. Now suppose that $\Gamma^{n+1}(X;a)\equiv 0$. Let $D_1,\ldots,D_n$  
 be \emph{Cartier} divisors on $X$: then
\begin{equation}\label{tuttimulti}
0=\pi_{n+1,*}(\Gamma^{n+1}(X;a)\cdot\pi_1^{*}D_1\cdot\ldots\cdot \pi_n^{*}D_n)=D_1\cdot\ldots\cdot D_n-\deg(D_1\cdot\ldots\cdot D_n)a
\end{equation}
in $\CH_0(X)_{\QQ}$. 
\begin{rmk}\label{rmk:unicopunto}
Equation~\eqref{tuttimulti}  shows that if $\Gamma^{n+1}(X;a)\equiv 0$ and $\Gamma^{n+1}(X;b)\equiv 0$ then $a\equiv b$. 
\end{rmk}
\begin{expl}\label{expl:spapro}
The intersection product between cycle classes of complementary dimension defines a perfect pairing on $\CH((\PP^n)^m)$. Let $a\in\PP^n$: since $\Gamma^{n+1}(\PP^n;a)$ pairs to $0$ with any class of complementary dimension it follows that $\Gamma^{n+1}(\PP^n;a)\equiv 0$.  
\end{expl}
\subsection{}\label{subsec:homcomp}
\setcounter{equation}{0}
In the present subsection we will assume that $X$ is a complex smooth projective variety of dimension $n$. Let $a\in X$. Let $\alpha_1,\ldots,\alpha_m\in H_{DR}(X)$ be De Rham homogeneous cohomology classes such that $\sum_{i=1}^m\deg\alpha_i=2n$. Thus it makes sense to integrate $\pi_1^{*}\alpha_1\wedge\ldots\wedge\pi_m^{*}\alpha_m$ on $\Gamma^m(X;a)$.  Let
\begin{equation}\label{eccoesse}
s:=|\{1\le i\le m \mid \deg\alpha_i=0\}|.
\end{equation}
A straightforward computation gives that
\begin{equation}\label{integrale}
\int_{\Gamma^m(X;a)} \pi_1^{*}\alpha_1\wedge\ldots\wedge\pi_m^{*}\alpha_m =
 \sum_{\ell=0}^{m-1}(-1)^{\ell}{{s}\choose{\ell}}\int_X \alpha_1\wedge\ldots\wedge\alpha_m.
\end{equation}
\begin{prp}\label{prp:banale}
Let $X$ be a smooth complex projective variety  and $a\in X$. Let $n$ be the  dimension  of $X$ and $d$ be its Albanese dimension. The homology class of $\Gamma^{m}(X;a)$ is torsion if and only if $m>(n+d)$. 
\end{prp}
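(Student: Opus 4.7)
The plan is first to translate the torsion condition on $[\Gamma^m(X;a)]\in H_{2n}(X^m,\QQ)$ into a statement purely about cup products on $X$, and then to analyse that statement via the structure of the Albanese map. For the translation, $[\Gamma^m(X;a)]$ is torsion iff it pairs trivially with every class of degree $2n$ on $X^m$; by the K\"unneth formula we may restrict to pure tensors $\pi_1^*\alpha_1\wedge\cdots\wedge\pi_m^*\alpha_m$ with $\alpha_i\in H^*(X,\QQ)$ and $\sum\deg\alpha_i=2n$. Formula~\eqref{integrale} computes each such integral: if some $\alpha_i$ has degree $0$ (but not all, since $n\geq 1$) the binomial sum vanishes automatically, while if all $\alpha_i$ have positive degree the integral reduces to $\int_X\alpha_1\wedge\cdots\wedge\alpha_m$. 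Consequently, $[\Gamma^m(X;a)]$ is torsion iff $\alpha_1\cup\cdots\cup\alpha_m=0$ in $H^{2n}(X,\QQ)$ for every choice of positive-degree classes $\alpha_i$ of total degree $2n$.

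It then remains to show that the largest $m$ for which such a nonvanishing product of positive-degree classes exists is exactly $n+d$. For the upper bound, if $a$ denotes the number of $\alpha_i$'s of degree $1$, the constraints $\sum\deg\alpha_i=2n$ and $\deg\alpha_i\geq 1$ give $a\geq 2m-2n$; for $m>n+d$ this forces $a>2d$. Using graded commutativity to regroup all degree-$1$ factors together, $\alpha_1\cup\cdots\cup\alpha_m$ contains a subproduct of more than $2d$ degree-$1$ classes, hence an element of degree exceeding $2d$ in the subring $R\subset H^*(X,\QQ)$ generated by $H^1(X,\QQ)$. The key input is that $R$ vanishes above degree $2d$: indeed $R=\im(\alpha_X^*)$ where $\alpha_X\colon X\to\Alb(X)$ factors through its image $Z$, a complex algebraic variety of complex dimension $d$, so the pullback factors through $H^*(Z,\QQ)$, which is zero in degrees $>2d$.

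For the lower bound, i.e.\ when $m\leq n+d$, choose $\omega_1,\ldots,\omega_d\in H^0(X,\Omega^1_X)$ whose wedge product is nonzero at a generic point of $X$; such forms exist because the differential of $\alpha_X$ has rank $d$ generically. Then the top class $\omega_1\wedge\cdots\wedge\omega_d\wedge\bar\omega_1\wedge\cdots\wedge\bar\omega_d\wedge h^{n-d}$ agrees up to a positive scalar with the strongly positive form $(i\,\omega_1\wedge\bar\omega_1)\wedge\cdots\wedge(i\,\omega_d\wedge\bar\omega_d)\wedge\omega_K^{n-d}$, where $\omega_K$ is a K\"ahler representative of $h$; its integral over $X$ is strictly positive. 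This gives a nonzero product of $n+d$ positive-degree classes; to realise any prescribed $m\leq n+d$, partition these $n+d$ factors into $m$ nonempty groups and cup within each group, producing $m$ positive-degree classes whose overall product is still nonzero. The main technical obstacle is the vanishing of the Albanese pullback in cohomological degrees exceeding $2d$, which rests on the vanishing of $H^k(Z,\QQ)$ for $k$ above twice the complex dimension of the (possibly singular) Albanese image $Z$; everything else is bookkeeping.
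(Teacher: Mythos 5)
Your proof is correct and follows essentially the same route as the paper: both reduce, via formula~\eqref{integrale}, to the (non)vanishing of cup products of positive-degree classes of total degree $2n$ on $X$, kill such products for $m>n+d$ using that the subalgebra generated by $H^1(X,\QQ)$ is the image of $\alb_X^{*}$ and hence vanishes above degree $2d$, and for $m\le n+d$ exhibit a nonvanishing product built from $d$ generically independent holomorphic $1$-forms, their conjugates, and powers of a K\"ahler class. The only differences are organizational --- your single count of degree-$1$ factors replaces the paper's two subcases $t>2d$ and $t\le 2d$, your grouping of a fixed set of $n+d$ factors replaces the paper's direct choice of $m$ factors, and you justify the degree-$2d$ vanishing topologically through $H^{*}(\im\alb_X,\QQ)$ rather than by the pointwise rank of $d\alb_X$ --- plus the degenerate case $n=0$, which the paper treats separately and you leave implicit.
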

\begin{proof}
If $n=0$ the result is obvious. From now on we assume that $n>0$. By~\eqref{iperpiano} we may assume that $m>n$. 
The homology class of $\Gamma^{m}(X;a)$ is torsion if and only if the left-hand side of~\eqref{integrale} vanishes for every choice of homogeneous $\alpha_1,\ldots,\alpha_m\in H_{DR}(X)$  such that $\sum_{i=1}^m\deg\alpha_i=2n$. 
 Suppose first that   $n<m\le(n+d)$ and let $m=n+e$: thus $0< e\le d$. Choose a point of $X$ and let $\alb_X\colon X\to\Alb(X)$ be the associated Albanese map.  
Let $\theta$ be a a K\"ahler form on $\Alb(X)$: by hypothesis $\dim(\im \alb_X)=d$ and hence there exist holomorphic $1$-forms $\psi_1,\ldots,\psi_e$ on $\Alb(X)$ such that  
\begin{equation}\label{positivo}
\int_{\im(\alb_X)}\psi_1\wedge\ldots\wedge\psi_e\wedge\ov{\psi}_1\wedge\ldots\wedge\ov{\psi}_e\wedge\theta^{d-e}>0.
\end{equation}
For $i=1,\ldots,e$ let $\phi_i:=\alb_X^{*}\psi_i$ and  $\eta:=\alb_X^{*}\theta$. Let $\omega\in H^2_{DR}(X)$ be a K\"ahler class. Equations~\eqref{integrale} and~\eqref{positivo} give that
\begin{equation}
\scriptstyle
\int_{\Gamma^m(X;a)} \pi_1^{*}\phi_1\wedge\ldots\wedge\pi_{e}^{*}\phi_e\wedge 
\pi_{e+1}^{*}\ov{\phi}_1\wedge\ldots\wedge\pi_{2e}^{*}\ov{\phi}_e\wedge 
\pi_{2e+1}^{*}\eta\wedge\ldots\wedge\pi_{e+d}^{*}\eta
\wedge\pi_{e+d+1}^{*}\omega\wedge\ldots\wedge\pi_{m}^{*}\omega
=\int_{X} \phi_1\wedge\ldots\wedge\phi_e\wedge 
\ov{\phi}_1\wedge\ldots\wedge\ov{\phi}_e\wedge 
\eta^{d-e}\wedge\omega^{n-d}>0
\end{equation}
It follows that the homology class of $\Gamma^{m}(X;a)$ is not  torsion. 
Lastly suppose that   $m>(n+d)$.
Let $s$ be given by~\eqref{eccoesse}: then $s\le (m-1)$ because $n>0$. It follows that if $s>0$   the right-hand side of~\eqref{integrale} vanishes (by the binomial formula). Now assume that $s=0$: by~\eqref{integrale} we have that
\begin{equation}\label{lazio}
\int_{\Gamma^m(X;a)} \pi_1^{*}\alpha_1\wedge\ldots\wedge\pi_m^{*}\alpha_m =\int_X \alpha_1\wedge\ldots\wedge\alpha_m.
\end{equation}
Let
\begin{equation}\label{eccoti}
t:=|\{1\le i\le m \mid \deg\alpha_i=1\}|.
\end{equation}
If $t>2d$ then the right-hand side of~\eqref{lazio} vanishes because every class in $H^1_{DR}(X)$ is represented by the pull-back of a closed $1$-form on $\Alb(X)$ via the Albanese map and by hypothesis $\dim(\im \alb_X)=d$. Now suppose that $t\le 2d$. Then 
\begin{equation}
\deg(\pi_1^{*}\alpha_1\wedge\ldots\wedge\pi_m^{*}\alpha_m)\ge t+2(m-t)>2n+2d-t\ge 2n 
\end{equation}
and hence the  right-hand side of~\eqref{lazio} vanishes because the integrand is identically zero. This proves that  if   $m>(n+d)$ the homology class of $\Gamma^{m}(X;a)$ is torsion. 
\end{proof}
\subsection{}\label{subsec:gradofin}
\setcounter{equation}{0}
Let $f\colon X\to Y$ be a map of finite non-zero degree between  projective varieties. Let $a\in X$ and $b:=f(a)$. Then $f_{*}\Gamma^m(X;a)=(\deg f)\Gamma^m(Y;b)$. It follows that if  $\Gamma^m(X;a)\equiv 0$  then  $\Gamma^m(Y;b)\equiv 0$.
\section{Products}\label{sec:prodotti}
We will prove~\Ref{prp}{protokunn} and then we will prove that if $T$ is a complex abelian surface then $\Gamma^5(T;a)\equiv 0$  for any $a\in T$. 
\subsection{Preliminary computations}
\setcounter{equation}{0}
 Let $X$ and $Y$ be projective varieties and $a\in X$, $b\in Y$. 
Let $\es\not=I\subset \{1,\ldots,r\}$ and $\es\not=J\subset \{1,\ldots,s\}$. Thus  $\Delta^r_{I}(X;a)\subset X^r$ and $\Delta^s_{J}(Y;b)\subset Y^s$: we let 
\begin{equation}\label{doppia}
\Delta^{r,s}_{I,J}(X,Y;a,b):=\Delta^r_{I}(X;a)\times \Delta^s_{J}(Y;b)\subset X^r\times Y^s.
\end{equation}
We let $\Delta^{r,s}(X,Y)=\Delta^{r,s}_{\{1,\ldots,r\},\{1,\ldots,s\}}(X,Y;a,b)$. For the remainder of the present section we let
\begin{equation}\label{semplifica}
e:=m+n-1.
\end{equation}
We will constantly make the identification
\begin{equation}\label{scambio}
\begin{matrix}
(X\times Y)^{e} & \overset{\sim}{\lra} & X^{e}\times Y^{e} \\
((x_1,y_1),\ldots,(x_{e},y_{e})) & \mapsto & (x_1,\ldots,x_{e},y_1,\ldots,y_{e})
\end{matrix}
\end{equation}
With the above notation~\Ref{prp}{protokunn} is equivalent to the following rational equivalence:
\begin{equation}\label{diagprod}
\sum\limits_{\es\not=I\subset \{1,\ldots,e\}}(-1)^{e-|I|}\Delta^{e,e}_{I,I}(X,Y;a,b)\equiv 0.
\end{equation}
\begin{prp}\label{prp:delsup}
Let $X$ be a smooth projective variety and $a\in X$. Suppose that $\Gamma^m(X;a)\equiv 0$. Then 
\begin{equation}\label{delsup}
\Delta^{m+r}(X)\equiv \sum_{1\le |J|\le (m-1)}(-1)^{m-1-|J| }{m+r-1-|J| \choose r}\Delta^{m+r}_J(X;a)
\end{equation}
for every $r\ge 0$.
\end{prp}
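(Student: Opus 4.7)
The proof is by induction on $r\ge 0$. The base case $r=0$ is a tautology: since $\binom{m-1-|J|}{0}=1$ for each $1\le |J|\le m-1$, the claimed identity on $X^m$ is just the equation $\Gamma^m(X;a)\equiv 0$ with the summand indexed by $I=\{1,\ldots,m\}$ moved to the left-hand side.

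For the inductive step $r\to r+1$, I push the identity on $X^{m+r}$ forward along the diagonal embedding $\delta\colon X^{m+r}\hookrightarrow X^{m+r+1}$ given by $\delta(y_1,\ldots,y_{m+r})=(y_1,\ldots,y_{m+r},y_{m+r})$. Under $\delta_{*}$ the small diagonal $\Delta^{m+r}(X)$ goes to $\Delta^{m+r+1}(X)$, while each $\Delta^{m+r}_J(X;a)$ is sent to $\Delta^{m+r+1}_{J\cup\{m+r+1\}}(X;a)$ if $m+r\in J$ and to $\Delta^{m+r+1}_{J}(X;a)$ otherwise. The resulting relation on $X^{m+r+1}$ is not symmetric in the coordinates, but averaging it over the action of $\mathcal S_{m+r+1}$ by coordinate permutation (which fixes $\Delta^{m+r+1}(X)$ and permutes the indexing subsets $J$) delivers a symmetric identity
\[
\Delta^{m+r+1}(X)\equiv \sum_{k=1}^{m}\mu_k\sum_{|J|=k}\Delta^{m+r+1}_J(X;a)
\]
with explicit rationals $\mu_k$ coming from elementary binomial bookkeeping.

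To eliminate the unwanted $k=m$ piece, I invoke the auxiliary relation obtained by pushing $\Gamma^m(X;a)\equiv 0$ forward under each of the $\binom{m+r+1}{m}$ inclusions $\iota_K\colon X^K\hookrightarrow X^{m+r+1}$ (with $|K|=m$ and coordinates outside $K$ set to $a$), and summing over $K$; this yields
\[
\sum_{k=1}^{m}(-1)^{m-k}\binom{m+r+1-k}{m-k}\sum_{|J|=k}\Delta^{m+r+1}_J(X;a)\equiv 0,
\]
which I solve for $\sum_{|J|=m}\Delta^{m+r+1}_J(X;a)$ in terms of the analogous sums with $|J|<m$. Substituting into the previous identity reduces the right-hand side to a combination of $\Delta^{m+r+1}_J(X;a)$ with $1\le |J|\le m-1$, and a short manipulation of binomial coefficients confirms that the coefficient of each such cycle with $|J|=k$ is exactly $(-1)^{m-1-k}\binom{m+r-k}{r+1}$, closing the induction. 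The whole argument is structural; the only delicate point is this final binomial identity.
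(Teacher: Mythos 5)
Your argument is correct and is essentially the paper's own proof: the same induction on $r$, with $\Delta^{m+r+1}(X)$ obtained by doubling the last coordinate of $\Delta^{m+r}(X)$ (your $\delta_{*}$ is the paper's intersection with $\pi_{m+r,m+r+1}^{*}\Delta^2(X)$), and the resulting cardinality-$m$ modified diagonals eliminated by pushing forward $\Gamma^m(X;a)\equiv 0$ along the inclusions $X^K\hookrightarrow X^{m+r+1}$. Your $\mathcal{S}_{m+r+1}$-symmetrization merely reorganizes the coefficient bookkeeping that the paper carries out case by case on $J\cap\{m+r,m+r+1\}$, and the final binomial identity you defer to does check out.
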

\begin{proof}
By induction on $r$. If $r=0$ then~\eqref{delsup} is equivalent to $\Gamma^m(X;a)\equiv 0$. Let's prove the inductive step.
Since   $\Gamma^m(X;a)\equiv 0$  we have that 
\begin{multline}
%
\Delta^{m+r+1}(X)\equiv \pi_{1,\ldots,m+r}^{*}\Delta^{m+r}(X)\cdot \pi_{m+r,m+r+1}^{*}\Delta^2(X)\equiv \\
\equiv \pi_{1,\ldots,m+r}^{*}\left(\sum_{\substack{J\subset\{1,\ldots,m+r\} \\ 1\le |J|\le(m-1)}}(-1)^{m-1-|J| }{m+r-1-|J| \choose r}\Delta^{m+r}_J(X;a)\right)\cdot \pi_{m+r,m+r+1}^{*}\Delta^2(X).
\end{multline}
Next notice that
\begin{equation}\label{duecasi}
\pi_{1,\ldots,m+r}^{*}\Delta^{m+r}_J(X;a)\cdot \pi_{m+r,m+r+1}^{*}\Delta^2(X)\equiv
\begin{cases}
\Delta^{m+r+1}_J(X;a) & \text{if $(m+r)\notin J$,}\\
\Delta^{m+r+1}_{J\cup\{m+r+1\}}(X;a) & \text{if $(m+r)\in J$,}
\end{cases}
\end{equation}
Thus $\Delta^{m+r+1}(X)$ is rationally equivalent to a linear combination of cycles  $\Delta^{m+r+1}_J(X;a)$ with $|J|\le(m-1)$ and of cycles $\Delta^{m+r+1}_{K}(X;a)$ where 
\begin{equation}\label{kappacond}
|K|=m,\qquad \{m+r,m+r+1\}\subset K. 
\end{equation}
Let $K$ be such a subset and  write $K=\{i_1,\ldots,i_m\}$ where $i_1<\ldots <i_m$. 
 Let $\iota\colon X^{m}\to X^{m+r+1}$ be the  map which composed  with the $j$-th projection of $X^{m+r+1}$ is equal to the constant map  to $a$ if $j\notin K$, and is equal to the $l$-th projection of $X^{m}$ if if $j=i_l$. Then $\Delta^{m+r+1}_K(X;a)=\iota_{*}\Delta^{m}$ and hence
the equivalence $\Gamma^m(a)\equiv 0$  gives that 
\begin{equation}
\Delta^{m+r+1}_{K}(X;a)\equiv \sum_{\substack{J\subset K \\ 1\le |J|\le(m-1)}}(-1)^{m-1-|J|}\Delta_J^{m+r+1}(X;a).
\end{equation}
Putting everything together we get an equivalence
\begin{equation}\label{caino}
\Delta^{m+r+1}(X)\equiv \sum_{1\le |J|\le (m-1)}(-1)^{m-1-|J| }c_J\Delta^{m+r+1}_J(X;a)
\end{equation}
In order to prove that $c_J={m+r-|J| \choose r+1}$ we distinguish four cases: they are indexed by the intersection
\begin{equation}\label{preiti}
J\cap\{m+r,m+r+1\}.
\end{equation}
Suppose  that~\eqref{preiti} is empty.  We get a contribution (to $c_J$) of ${m+r-1-|J| \choose r}$ from the first case in~\eqref{duecasi}, and a contribution of 
\begin{equation}
\scriptstyle
|\{(J\cup\{m+r,m+r+1\})\subset K \subset\{1,\ldots,m+r+1\} \mid |K|=m\}|=
{m+r-1-|J|\choose m-2-|J|}={m+r-1-|J|\choose r+1}
\end{equation}
from the subsets $K$ satisfying~\eqref{kappacond}. This proves that $c_J={m+r-|J| \choose r+1}$  in this case. The proof in the other three cases is similar.
\end{proof}
\begin{crl}\label{crl:delsup}
Let $X$ be a smooth projective variety and $a\in X$. Suppose that $\Gamma^m(X;a)\equiv 0$. Let $s\ge 0$ and  $I\subset \{1,\ldots,m+s\}$ be a subset of cardinality at least
 $m$.  Then 
\begin{equation}\label{delsupdue}
\Delta^{m+s}_I(X;a)\equiv \sum_{\substack{J\subset I \\ 1\le |J|\le(m-1)}}(-1)^{m-1-|J|}{|I|-|J| -1\choose |I|-m}\Delta^{m+s}_J(X;a).
\end{equation}
\end{crl}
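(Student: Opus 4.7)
The plan is to reduce the statement to \Ref{prp}{delsup} by push-forward along a coordinate inclusion that fills the remaining factors with $a$. Set $r:=|I|-m\ge 0$ and enumerate $I=\{i_1<\cdots<i_{m+r}\}$. I would introduce the closed immersion $\iota\colon X^{m+r}\hra X^{m+s}$ whose $j$-th component is the $\ell$-th projection $X^{m+r}\to X$ when $j=i_\ell\in I$, and the constant map to $a$ when $j\notin I$. This is the same kind of map that already appeared (also called $\iota$) in the proof of \Ref{prp}{delsup}.

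The first step is to check, directly from the defining conditions in~\eqref{diagtwist}, that for every nonempty $J'\subset \{1,\ldots,m+r\}$ one has
\begin{equation*}
\iota_{*}\Delta^{m+r}_{J'}(X;a)=\Delta^{m+s}_{J}(X;a),\qquad J:=\{i_\ell\mid \ell\in J'\}\subset I,
\end{equation*}
and in particular $\iota_{*}\Delta^{m+r}(X)=\Delta^{m+s}_I(X;a)$ (the case $J'=\{1,\ldots,m+r\}$). The assignment $J'\mapsto J$ is a cardinality-preserving bijection between nonempty subsets of $\{1,\ldots,m+r\}$ and nonempty subsets of $I$.

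The second step is to apply \Ref{prp}{delsup} with this value of $r$, which gives
\begin{equation*}
\Delta^{m+r}(X)\equiv\sum_{1\le|J'|\le m-1}(-1)^{m-1-|J'|}\binom{m+r-1-|J'|}{r}\Delta^{m+r}_{J'}(X;a)
\end{equation*}
in $\CH(X^{m+r})_{\QQ}$, and then to push this equivalence forward along $\iota$, reindexing the sum via the bijection $J'\mapsto J$. Substituting $|J|=|J'|$ and $r=|I|-m$ turns $m+r-1-|J'|$ into $|I|-1-|J|$ and the upper index of the binomial into $|I|-m$, recovering exactly~\eqref{delsupdue}.

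I do not expect any genuine obstacle: once the reduction via $\iota_{*}$ is set up, the whole argument is formal, and the only thing to check carefully is the identification $\iota_{*}\Delta^{m+r}_{J'}(X;a)=\Delta^{m+s}_{J}(X;a)$, which is immediate from the fact that $\iota$ inserts $a$ in every position outside $I$. The combinatorial relabeling then automatically produces the binomial coefficient $\binom{|I|-|J|-1}{|I|-m}$.
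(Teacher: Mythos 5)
Your proposal is correct and is essentially identical to the paper's own proof: the paper also defines the inclusion $\iota\colon X^{|I|}\to X^{m+s}$ sending positions outside $I$ to $a$, observes $\Delta^{m+s}_I(X;a)=\iota_{*}\Delta^{|I|}(X)$, and invokes \Ref{prp}{delsup} with $r=|I|-m$. The reindexing of the binomial coefficient works exactly as you describe.
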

\begin{proof}
Let $q:=|I|$ and $I=\{i_1,\ldots,i_q\}$ where $i_1<\ldots <i_q$. Let $\iota\colon X^{q}\to X^{m+s}$ be the  map which composed  with the $j$-th projection of  $X^{m+s}$ is equal to the constant map to $a$ if $j\notin I$, and   is equal to the $l$-th projection of $X^{m}$ if if $j=i_l$. 
 Then $\Delta^{m+s}_I(X;a)=\iota_{*}\Delta^{q}(X)$ and one gets~\eqref{delsupdue} by invoking~\Ref{prp}{delsup}. 
\end{proof}
\begin{crl}\label{crl:emmabon}
Let $X$, $Y$ be  smooth projective varieties and $a\in X$, $b\in Y$. Suppose that $\Gamma^m(X;a)\equiv 0$ and   $\Gamma^n(Y;a)\equiv 0$. 
Assume that $m\le n$. Let $I\subset \{1,\ldots,e\}$ (recall that $e=m+n-1$). 
\begin{enumerate}
\item 
 If $n\le |I|$  then 
\begin{equation}\label{moavero}
\Delta^{e,e}_{I,I}(X,Y;a,b)\equiv \sum_{\substack{(J\cup K)\subset I \\ 1\le |J|\le(m-1) \\ 1\le |K|\le (n-1)}}(-1)^{m+n-|J|-|K|}
{|I|-|J| -1\choose m-|J|-1}{|I|-|K|-1\choose n-|K| -1}\Delta^{e,e}_{J,K}(X,Y;a,b).
\end{equation}
\item 
If $m\le |I|<n$  then 
\begin{equation}\label{saccomanni}
\Delta^{e,e}_{I,I}(X,Y;a,b)\equiv \sum_{\substack{J\subset  I \\ 1\le |J|\le(m-1)}}(-1)^{m-1-|J|}
{|I|-|J| -1\choose m-|J|-1}\Delta^{e,e}_{J,I}(X,Y;a,b).
\end{equation}
\end{enumerate}
\end{crl}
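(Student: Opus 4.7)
The plan is to use the identification~\eqref{scambio} to write $\Delta^{e,e}_{I,I}(X,Y;a,b)=\Delta^e_I(X;a)\times\Delta^e_I(Y;b)$ as a cycle on $X^e\times Y^e$, together with the elementary fact that the external product respects rational equivalence (a rational equivalence $A\equiv A'$ of cycles on $X^e$, realized by a cycle on $X^e\times\PP^1$, produces a rational equivalence $A\times B\equiv A'\times B$ on $X^e\times Y^e$ by external product with $B$). Once this is granted, each factor can be decomposed independently using~\Ref{crl}{delsup}, whose hypothesis $|I|\ge m$ (respectively $|I|\ge n$) governs when it applies on the $X$ side (respectively the $Y$ side).

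For part~(1), the inequality $|I|\ge n\ge m$ permits~\eqref{delsupdue} to be invoked on both factors. Applied to $\Delta^e_I(X;a)$ with $s=n-1$ it gives
\begin{equation*}
\Delta^e_I(X;a)\equiv \sum_{\substack{J\subset I \\ 1\le |J|\le m-1}}(-1)^{m-1-|J|}\binom{|I|-|J|-1}{|I|-m}\Delta^e_J(X;a),
\end{equation*}
and symmetrically on the $Y$-factor with parameters $(n,m-1)$. Taking the external product of these two equivalences and using $\Delta^e_J(X;a)\times\Delta^e_K(Y;b)=\Delta^{e,e}_{J,K}(X,Y;a,b)$ yields~\eqref{moavero}, after two routine cosmetic rewrites: the Pascal symmetry $\binom{|I|-|J|-1}{|I|-m}=\binom{|I|-|J|-1}{m-|J|-1}$, valid because $|J|\le m-1\le |I|-1$, and the sign simplification $(-1)^{m-1-|J|}(-1)^{n-1-|K|}=(-1)^{m+n-|J|-|K|}$.

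For part~(2), the hypothesis $m\le|I|<n$ still permits~\eqref{delsupdue} on the $X$-factor, but not on the $Y$-factor. Leaving $\Delta^e_I(Y;b)$ untouched and applying~\eqref{delsupdue} only to $\Delta^e_I(X;a)$, the external product directly produces~\eqref{saccomanni}, again after Pascal symmetry on the binomial.

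No step is technically subtle: the only point that requires a moment's thought is the preservation of rational equivalence under external product, which is standard, and the combinatorial bookkeeping (binomial symmetry and sign collection) is routine. The corollary is therefore an essentially formal consequence of~\Ref{crl}{delsup}.
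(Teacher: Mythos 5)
Your proof is correct and follows exactly the paper's route: decompose each factor $\Delta^e_I(X;a)$ and $\Delta^e_I(Y;b)$ via Corollary~\ref{crl:delsup} (only the $X$-factor in case~(2)) and take the external product, the paper merely leaving the Pascal symmetry ${|I|-|J|-1\choose |I|-m}={|I|-|J|-1\choose m-|J|-1}$ and the sign bookkeeping implicit. No discrepancy to report.
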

\begin{proof}
By  definition $\Delta^{e,e}_{I,I}(X,Y;a,b)=\Delta^{e}_{I}(X;a)\times \Delta^{e}_{I}(Y;b)$. Now suppose that   $n\le |I|$. By~\Ref{crl}{delsup}  the first factor is rationally equivalent to a linear combination of $\Delta_J^e(X;a)$'s with $J\subset I$ and $1\le |J|\le(m-1)$,  the second factor is rationally equivalent to a linear combination of $\Delta_K^e(Y;b)$'s with $K\subset I$ and $1\le |K|\le(n-1)$: writing out the product one gets~\eqref{moavero}. The proof of~\eqref{saccomanni} is similar.
\end{proof}
\subsection{Linear relations between binomial coefficients.}\label{subsec:coeffbin}
\setcounter{equation}{0}
The following fact will be useful:
\begin{equation}\label{combcomb}
\sum_{t=0}^n(-1)^t p(t){n\choose t}=0\qquad \forall p\in\QQ[x]\text{ such that $\deg p<n$.}
\end{equation}
In order to prove~\eqref{combcomb} let $d<n$: then we have
\begin{equation}
\sum_{t=0}^n(-1)^t {t\choose d}{n\choose t}={n\choose d}\sum_{t=d}^n(-1)^t {n-d\choose t-d}=(1-1)^{n-d}=0.
\end{equation}
Since $\{{x\choose 0},{x\choose 1},\ldots,{x\choose n-1}\}$ is a basis of the vector space of polynomials of degree at most $(n-1)$ Equation~\eqref{combcomb} follows.
\subsection{Proof of the main result.}\label{subsec:combinatorica}
\setcounter{equation}{0}
 We will prove~\Ref{prp}{protokunn}. 
 As noticed above it suffices to prove that~\eqref{diagprod} holds. 
 Without loss of generality we may assume that $m\le n$. \Ref{crl}{emmabon} gives that for each   $1\le t\le e$  and $J,K\subset \{1,\ldots,e\}$  with $|J|\le(m-1)$, $|K|\le(n-1)$ there exists $c_{J,K}(t)$  such that
\begin{equation}\label{}
\sum_{|I|=t}\Delta^{e,e}_{I,I}(X,Y;a,b)\equiv \sum_{\substack{J,K\subset  \{1,\ldots,e\} \\ 1\le |J|\le(m-1) \\ 1\le |K|\le (n-1)}}
c_{J,K}(t)\Delta^{e,e}_{J,K}(X,Y;a,b).
\end{equation}
It will suffice to prove that for each $J,K$ as above we have
\begin{equation}\label{jannacci}
\sum_{t=1}^{e}(-1)^{t} c_{J,K}(t)=0.
\end{equation}
Equations~\eqref{moavero} and~\eqref{saccomanni} give that  $c_{J,K}(t)=0$ if $t<|J\cup K|$  and that 
\begin{equation}\label{cikappati}
c_{J,K}(t)= (-1)^{m+n-|J|-|K|}
{t-|J| -1\choose m-|J|-1}{t-|K|-1\choose n-|K|-1}{e-|J\cup K| \choose t-|J\cup K| },\quad \max\{|J\cup K|,n\}\le t\le e.
\end{equation}
We distinguish between the four cases:
\begin{enumerate}
\item 
$J\not\subset K$.
\item 
$J\subset K$ and $m\le|K|$.
\item 
$J\subset K$, $J\not=K$ and $|K|<m$.
\item 
$J= K$ and $|K|<m$.
\end{enumerate}
Suppose that~(1) holds.
 Then~\Ref{crl}{emmabon} gives that  $c_{J,K}(t)=0$ if $t<n$.  Let $p\in\QQ[x]$ be given by 
 \begin{equation}\label{bersani}
p:=(-1)^{m+n-|J|-|K|}{x-|J|-1\choose m-|J|-1}{x-|K|-1\choose n-|K|-1}.
\end{equation}
We must prove that
\begin{equation}\label{serenella}
\sum_{t=\max\{|J\cup K|,n\}}^e (-1)^t p(t){e-|J\cup K| \choose t-|J\cup K| }=0.
\end{equation}
If $n\le |J\cup K|$ then~\eqref{serenella} follows at once from~\eqref{combcomb} (notice that $\deg p<(e-|J\cup K|)$),   
 if $n< |J\cup K|$ then~\eqref{serenella} follows  from~\eqref{combcomb} and the fact that $p(i)=0$ for $|J\cup K|\le i\le(n-1)$. This proves~\eqref{jannacci} if Item~(1) above holds. Now let's assume that Item~(2) above holds. Then $|J\cup K|=|K|<n$: it follows that  if $n\le t$ then $c_{J,K}(t)$  is given by~\eqref{cikappati}. On the other hand~\Ref{crl}{emmabon} gives that  if $t<n$ and $t\not=|K|$ then $c_{J,K}(t)=0$, and
 \begin{equation}
c_{J,K}(|K|)=(-1)^{m-1-|J|}{|K|-|J|-1\choose m-|J|-1}.
\end{equation}
 Thus we must prove that
\begin{equation}\label{marolla}
(-1)^{|K|}(-1)^{m-1-|J|}{|K|-|J|-1\choose m-|J|-1}+\sum_{t=n}^e (-1)^t p(t){e-|J\cup K| \choose t-|J\cup K| }=0
\end{equation}
where $p$ is given by~\eqref{bersani}. Now notice that $0=p(|K|+1)=\ldots=p(n-1)$: thus~\eqref{combcomb} gives that 
\begin{equation*}
\scriptstyle
\sum_{t=n}^e (-1)^t p(t){e-|J\cup K| \choose t-|J\cup K| }=-(-1)^{|K|}p(|K|){e-|K|\choose 0}=(-1)^{m+n-1-|J|}
{|K|-|J|-1\choose m-|J|-1}{-1\choose n-|K|-1}=(-1)^{m-|J|-|K|}{|K|-|J|-1\choose m-|J|-1}.
\end{equation*}
This proves that~\eqref{marolla} holds. If Item~(3) above holds one proves~\eqref{jannacci} arguing as in Item~(1), if Item~(4) holds the argument is similar to that given if Item~(2) holds. 
\qed
\subsection{Stability.}\label{subsec:stabilizza}
\setcounter{equation}{0}
We will prove a result that will be useful later on.
\begin{prp}\label{prp:stabile}
Let $X$ be a smooth projective variety and $a\in X$. Suppose that $\Gamma^m(X;a)\equiv 0$. If $s\ge 0$ then 
$\Gamma^{m+s}(X;a)\equiv 0$.
\end{prp}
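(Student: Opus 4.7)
My plan is to induct on $s$, the base case $s=0$ being exactly the hypothesis. For the inductive step, assuming $\Gamma^{m+s}(X;a)\equiv 0$, I only need the single implication ``$\Gamma^M(X;a)\equiv 0 \Rightarrow \Gamma^{M+1}(X;a)\equiv 0$'', applied with $M:=m+s$; so the whole problem reduces to the case $s=1$.

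For that case I would split the expansion of $\Gamma^{M+1}(X;a)$ according to $|I|$:
$$\Gamma^{M+1}(X;a) = \Delta^{M+1}(X) \;-\; \sum_{|I|=M}\Delta^{M+1}_I(X;a) \;+\; \sum_{\substack{\es\not=I\subset\{1,\ldots,M+1\} \\ |I|\le M-1}}(-1)^{M+1-|I|}\Delta^{M+1}_I(X;a).$$
Then I rewrite $\Delta^{M+1}(X)$ using \Ref{prp}{delsup} (with $r=1$) and each $\Delta^{M+1}_I(X;a)$ for $|I|=M$ using \Ref{crl}{delsup} (with $s=1$); both substitutions express the relevant cycle as a $\QQ$-linear combination of cycles $\Delta^{M+1}_J(X;a)$ with $1\le|J|\le M-1$. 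Interchanging the order of summation yields
$$\Gamma^{M+1}(X;a) \equiv \sum_{1\le|J|\le M-1} c_{|J|}\, \Delta^{M+1}_J(X;a),$$
and with $j:=|J|$ a short count gives
$$c_j \;=\; (-1)^{M+1-j} \;+\; (-1)^{M-1-j}(M-j) \;-\; (-1)^{M-1-j}\binom{M+1-j}{M-j}.$$
The three summands come, respectively, from $I=J$ in the original expansion of $\Gamma^{M+1}(X;a)$, from \Ref{prp}{delsup}, and from \Ref{crl}{delsup} summed over the $\binom{M+1-j}{M-j}=M+1-j$ subsets $I\subset\{1,\ldots,M+1\}$ of size $M$ that contain $J$.

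Since $(-1)^{M+1-j}=(-1)^{M-1-j}$ and $\binom{M+1-j}{M-j}=M+1-j$, the bracketed quantity $1+(M-j)-(M+1-j)$ vanishes, so $c_j=0$ for every admissible $j$, giving $\Gamma^{M+1}(X;a)\equiv 0$ and closing the induction. I do not anticipate any serious obstacle: the substantive reductions are already built into \Ref{prp}{delsup} and \Ref{crl}{delsup}, and what remains is routine coefficient-counting plus a one-line binomial cancellation.
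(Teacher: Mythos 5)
Your proof is correct, and it rests on the same two ingredients as the paper's (\Ref{prp}{delsup} and \Ref{crl}{delsup}, i.e.\ re-expressing the large-$|I|$ strata of the modified diagonal in terms of the $\Delta^{M+1}_J(X;a)$ with $|J|\le M-1$), but you organize the argument differently and the difference is a real simplification. The paper treats general $s$ in one shot: it substitutes \eqref{delsupdue} into every term with $|I|\ge m$, which produces a coefficient $c_\ell$ given by an alternating sum of products of binomials over $r=0,\ldots,s$, and its vanishing then requires the identity \eqref{combcomb} ($\sum_t(-1)^tp(t)\binom{n}{t}=0$ for $\deg p<n$). By first reducing to the single increment $M\mapsto M+1$ via induction on $s$, you only ever invoke \Ref{prp}{delsup} with $r=1$ and \Ref{crl}{delsup} with $|I|=M$, so the binomial coefficients degenerate to $M-j$, $1$, and $M+1-j$, and the cancellation $1+(M-j)-(M+1-j)=0$ is immediate — no appeal to \eqref{combcomb} is needed. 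Your coefficient count is right (for fixed $J$ of size $j$ there are indeed $\binom{M+1-j}{M-j}=M+1-j$ sets $I$ of size $M$ containing $J$, and the $I=J$ term of the original expansion contributes $(-1)^{M+1-j}=(-1)^{M-1-j}$). The only cosmetic omission is the degenerate case $\dim X=0$, where $\Gamma^k(X;a)$ is \emph{defined} to be $0$ rather than by the alternating sum you expand; the paper disposes of it in one line and you should too.
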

\begin{proof}
If $\dim X=0$ the result is trivial. Assume that $\dim X>0$.  By definition
\begin{equation}
\Gamma^{m+s}(X;a):=\sum\limits_{\es\not= I\subset \{1,2,\ldots,m+s\}}(-1)^{m+s-|I|}\Delta^{m+s}_I(X;a).
\end{equation}
Replacing $\Delta^{m+s}_I(X;a)$ for $m\le |I|\le(m+s)$  by the right-hand side of~\eqref{delsupdue} we get that
\begin{equation}
\Gamma^{m+s}(X;a):=\sum_{1\le\ell\le(m-1)}c_{\ell}
\left(\sum\limits_{| I |=\ell}(-1)\Delta^{m+s}_I(X;a)\right)
\end{equation}
where
\begin{equation}
c_{\ell}=\sum_{r=0}^s(-1)^{m-\ell-1+s-r}{m-\ell-1+r\choose m-\ell-1}{m+s-\ell\choose s-r}+(-1)^{m+s-\ell}.
\end{equation}
Thus it suffices to prove that $c_{\ell}=0$ for $1\le \ell\le(m-1)$. Letting $t=s-r$ we get that 
\begin{multline}
(-1)^{m-\ell-1}c_{\ell}=\sum_{t=0}^s(-1)^{t}{m-\ell-1+s-t\choose m-\ell-1}{m+s-\ell\choose t}+(-1)^{s-1}=\\
=\sum_{t=0}^{m+s-\ell}(-1)^{t}{m-\ell-1+s-t\choose m-\ell-1}{m+s-\ell\choose t}=0
\end{multline}
where the last equality follows from~\eqref{combcomb}.
\qed
\end{proof}
\subsection{Applications}\label{subsec:supab}
\setcounter{equation}{0}
\begin{prp}\label{prp:jaciper}
Suppose that $C$ is a smooth projective curve of genus $g$ and  that there exists a degree-$2$ map $f\colon C\to\PP^1$ ramified at $p\in C$.  Then 
\begin{enumerate}
\item 
$\Gamma^{2g+1}(C^g;(p,\ldots,p))\equiv 0$,
\item 
$\Gamma^{2g+1}(C^{(g)};gp)\equiv 0$, and
\item 
$\Gamma^{2g+1}(\Pic^0(C);a)\equiv 0$ for any $a\in \Pic^0(C)$. 
\end{enumerate}
\end{prp}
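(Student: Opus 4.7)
\emph{Plan.} The three statements will be proved in sequence by propagating the Gross--Schoen vanishing $\Gamma^3(C;p)\equiv 0$ through products, a finite quotient, and an Abel--Jacobi morphism.

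For~(1) I would induct on $g$. The case $g=0$ is vacuous (the variety reduces to a point) and the case $g=1$ is precisely the Gross--Schoen result, since the ramification point $p$ of $f\colon C\to\PP^1$ is a fixed point of the hyperelliptic involution (this is also noted verbatim in the introduction for elliptic curves). For the inductive step, apply~\Ref{prp}{protokunn} to the pair $(C^{g-1},(p,\ldots,p))$ and $(C,p)$ with $m=2g-1$ and $n=3$: the inductive hypothesis $\Gamma^{2g-1}(C^{g-1};(p,\ldots,p))\equiv 0$ together with $\Gamma^3(C;p)\equiv 0$ yields $\Gamma^{(2g-1)+3-1}(C^{g-1}\times C;((p,\ldots,p),p))=\Gamma^{2g+1}(C^g;(p,\ldots,p))\equiv 0$.

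For~(2), the symmetrization morphism $q\colon C^g\to C^{(g)}$ is finite and surjective of degree $g!\ne 0$, and satisfies $q(p,\ldots,p)=gp$. By~\Ref{subsec}{gradofin}, the vanishing in~(1) pushes forward to $\Gamma^{2g+1}(C^{(g)};gp)\equiv 0$.

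For~(3), fix any $a\in\Pic^0(C)$ and consider the morphism
\[
u_a\colon C^{(g)}\lra \Pic^0(C),\qquad D\longmapsto \cO_C(D-gp)+a,
\]
obtained as the Abel--Jacobi map $C^{(g)}\to\Pic^g(C)$ composed with the isomorphism $\Pic^g(C)\cong\Pic^0(C)$, $L\mapsto L(-gp)$, and then with translation by $a$. The map $u_a$ is birational and hence of degree $1\ne 0$: a generic effective divisor $D$ of degree $g$ is nonspecial, so Riemann--Roch gives $h^0(\cO_C(D))=1$, meaning the generic fiber of the Abel--Jacobi map is a single point. Since $u_a(gp)=a$, combining~(2) with~\Ref{subsec}{gradofin} applied to $u_a$ produces $\Gamma^{2g+1}(\Pic^0(C);a)\equiv 0$.

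No step should present real difficulty: once (1) is in hand, parts (2) and (3) are immediate applications of functoriality under finite surjections. The only non-formal ingredient is the classical birationality of the Abel--Jacobi map $C^{(g)}\to\Pic^g(C)$, and this is the standard Riemann--Roch argument recalled above.
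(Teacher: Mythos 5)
Your proposal is correct and follows essentially the same route as the paper: the Gross--Schoen vanishing $\Gamma^3(C;p)\equiv 0$ at the ramification point, repeated application of \Ref{prp}{protokunn} for~(1), pushforward along the finite quotient $C^g\to C^{(g)}$ via \Ref{subsec}{gradofin} for~(2), and pushforward along the birational Abel--Jacobi map combined with translation for~(3). The only cosmetic difference is that the paper first proves~(3) at the origin and then translates, whereas you absorb the translation into the map $u_a$ from the start.
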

\begin{proof}
By Proposition~4.8 of~\cite{groscho} we have $\Gamma^3(C;p)\equiv 0$. Repeated application of~\Ref{prp}{protokunn} gives the first item.  The  quotient map $C^g\to C^{(g)}$ is finite and the image of $(p,\ldots,p)$ is $gp$: 
thus Item~(2)  follows from Item~(1) and~\Ref{subsec}{gradofin}.
Let $u_g\colon C^{(g)}\to \Pic^0(C)$ be the map $D\mapsto [D-gp]$: since $u_g$ is birational Item~(2) and~\Ref{subsec}{gradofin} give that  $\Gamma^{2g+1}(\Pic^0(C);{\bf 0})\equiv 0$ where ${\bf 0}$ is the origin of $\Pic^0(C)$. Acting by translations we get that $\Gamma^{2g+1}(\Pic^0(C);a)\equiv 0$ for any $a\in \Pic^0(C)$. 
\end{proof}
\begin{crl}\label{crl:jaciper}
If $T$ is a complex abelian surface then $\Gamma^5(T;a)\equiv 0$ for any $a\in T$.
\end{crl}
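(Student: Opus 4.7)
The strategy is to use the structure theory of complex abelian surfaces up to isogeny in order to reduce to the cases already treated, namely products of elliptic curves and Jacobians of genus-$2$ curves.

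First I would recall the following structural fact: every complex abelian surface $T$ is isogenous to an abelian surface $T_0$ of one of the two following types. Either $T_0=E_1\times E_2$ is a product of two elliptic curves, or $T_0=\Pic^0(C)$ for a smooth projective curve $C$ of genus $2$. Indeed, $T$ being projective admits a polarization, hence is isogenous to a principally polarized abelian surface; by the Torelli-type classification in dimension $2$, a principally polarized abelian surface is either a Jacobian of a smooth genus-$2$ curve or a product of elliptic curves with product polarization.

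Next I would handle each case separately and produce a point $b\in T$ with $\Gamma^5(T;b)\equiv 0$. In the product case, Gross--Schoen gives $\Gamma^3(E_i;0)\equiv 0$ for $i=1,2$, and applying~\Ref{prp}{protokunn} with $m=n=3$ yields $\Gamma^5(E_1\times E_2;(0,0))\equiv 0$. In the Jacobian case, every smooth projective curve of genus $2$ is hyperelliptic (its canonical map realizes it as a double cover of $\PP^1$), so there is a ramification point $p\in C$ of a degree-$2$ map $C\to\PP^1$; then~\Ref{prp}{jaciper}(3) gives $\Gamma^5(\Pic^0(C);a')\equiv 0$ for every $a'\in\Pic^0(C)$. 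In either case, applying the isogeny $\phi\colon T_0\to T$ (a map of finite non-zero degree between projective varieties) and invoking~\Ref{subsec}{gradofin}, we deduce $\Gamma^5(T;b)\equiv 0$ for $b=\phi(a_0)$ where $a_0$ is the chosen basepoint in $T_0$.

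Finally, to upgrade from some point $b\in T$ to an arbitrary point $a\in T$, I would use that for each $c\in T$ the translation $\tau_c\colon T\to T$, $x\mapsto x+c$, is an isomorphism, and its induced self-isomorphism of $T^5$ sends $\Delta^5_I(T;b)$ to $\Delta^5_I(T;b+c)$ for every $\varnothing\neq I\subset\{1,\ldots,5\}$. Hence it sends the class $\Gamma^5(T;b)$ to $\Gamma^5(T;b+c)$; choosing $c:=a-b$ concludes the proof.

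I do not expect any real obstacle in this argument: the core vanishings are already provided by Gross--Schoen and by~\Ref{prp}{jaciper}, and the only non-routine input is the classical classification of principally polarized abelian surfaces, which is the only step that is genuinely specific to dimension $2$ (and is the reason why this elementary approach does not extend to higher-dimensional abelian varieties, where one needs the theorem of Moonen--Yin).
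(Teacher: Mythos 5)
Your proposal is correct and follows essentially the same route as the paper: reduce by isogeny to a principally polarized abelian surface, which is either a product of elliptic curves (handled by Gross--Schoen plus \Ref{prp}{protokunn}) or the Jacobian of a genus-$2$ curve (handled by \Ref{prp}{jaciper}), and transport the vanishing via \Ref{subsec}{gradofin}. The only cosmetic difference is that you perform the translation step on $T$ at the end, whereas the paper obtains the vanishing at every point of the model surface directly (the translation argument already being built into \Ref{prp}{jaciper}(3)).
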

\begin{proof}
  There exists a principally polarized abelian surface $J$ and an isogeny $J\to T$. By~\Ref{subsec}{gradofin} it suffices to prove that $\Gamma^5(J;b)\equiv 0$ for any $b\in J$. The surface  $J$ is either a  product of two elliptic curves $E_1,E_2$ or   
 the Jacobian of a smooth genus-$2$ curve $C$.  Suppose that the former holds.  Let $a=(p_1,p_2)$ where $p_i\in E_i$ for $i=1,2$. Then $\Gamma^3(E_i;p_i)\equiv 0$  by Proposition~4.8 of~\cite{groscho}  and hence~\Ref{prp}{protokunn} gives that $\Gamma^5(E_1\times E_2;(p_1,p_2))\equiv 0$. If $J$ is    
 the Jacobian of a smooth genus-$2$ curve $C$ the corollary follows at once from~\Ref{prp}{jaciper}. 
\end{proof}
\section{$\PP^r$-fibrations}\label{sec:fibrazioni}
Let $Y$ be a smooth projective variety. Let $\cF$ be a locally-free sheaf of rank $(r+1)$ on $Y$ and $X:=\PP(\cF)$. Thus the structure map $\rho\colon X\to Y$ is a $\PP^r$-fibration. Let  $Z:=c_1(\cO_X(1))\in\CH^1(X)$. Suppose that there exists $b\in Y$ such that $\Gamma^m(Y;b)\equiv 0$ and let $a\in\rho^{-1}(b)$. If $\PP(\cF)$ is trivial  then 
\begin{equation}\label{traslo}
\Gamma^{m+r}(X;a)\equiv 0
\end{equation}
 by~\Ref{expl}{spapro} and~\Ref{prp}{protokunn}. In general~\eqref{traslo} does not hold.  In fact suppose that
 $Y$ is a $K3$ surface and hence  $\Gamma^3(Y;b)\equiv 0$  where $b$ is a point lying on a rational curve~\cite{beauvoisin}.  If $\Gamma^{3+r}(X;a)\equiv 0$ then the top self-intersection of any divisor class on $X$ is a multiple of $[a]$, see~\Ref{subsec}{significato}: considering $Z^{r+2}$ we get that   $c_2(\cF)$ is a multiple of $[b]$. 
We will prove the following results.
\begin{prp}\label{prp:rigcurva}
Keep notation as above and suppose that $\dim Y=1$. If $\Gamma^m(Y;b)\equiv 0$ then  
$\Gamma^{m+r}(X;a)\equiv 0$.
\end{prp}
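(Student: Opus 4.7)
The plan is to construct a $\QQ$-algebra isomorphism $\CH(X^n)_\QQ \cong \CH((Y\times\PP^r)^n)_\QQ$ that identifies the modified-diagonal cycles on both sides, and then apply \Ref{prp}{protokunn} to $Y$ and $\PP^r$. Since $Y$ is a curve, $c_i(\cF)=0$ for $i\geq 2$ and $c_1(\cF)^2=0$, so the projective bundle formula reduces to $Z^{r+1}=\rho^*c_1(\cF)\cdot Z^r$ in $\CH(X)$. Setting $Z':=Z-\tfrac{1}{r+1}\rho^*c_1(\cF)\in\CH(X)_\QQ$, the binomial expansion together with $c_1(\cF)^2=0$ yields $(Z')^{r+1}=0$. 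Iterating the projective bundle formula then produces, for every $n$, a canonical $\QQ$-algebra isomorphism
\begin{equation*}
\Phi_n\colon\CH(X^n)_\QQ\overset{\sim}{\lra}\CH(Y^n)_\QQ\otimes_\QQ\QQ[Z'_1,\ldots,Z'_n]/((Z'_i)^{r+1})=\CH((Y\times\PP^r)^n)_\QQ
\end{equation*}
which is the identity on $(\rho^n)^*\CH(Y^n)_\QQ$, sends $Z'_i$ to the hyperplane class $H_i$ of the $i$-th $\PP^r$-factor, and commutes by construction with pullback along every projection.

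The geometric core of the plan is to verify that $\Phi_1[a]=[(b,a_0)]$ and $\Phi_2[\Delta^2(X)]=[\Delta^2(Y\times\PP^r)]$, where $a_0\in\PP^r$ is any point. For the point class, the projection formula gives $[a]=\rho^*[b]\cdot Z^r$; since $\rho^*[b]\cdot\rho^*c_1(\cF)=\rho^*([b]\cdot c_1(\cF))=0$ (as $\CH^2(Y)=0$), one has $[a]=\rho^*[b]\cdot(Z')^r$, hence $\Phi_1[a]=[b]\cdot H^r=[(b,a_0)]$. For the diagonal, write $[\Delta^2(X)]=\sum_{i,j=0}^r(Z'_1)^i(Z'_2)^j\cdot(\rho^2)^*\alpha_{i,j}$ and extract the coefficients by multiplying with $(Z'_1)^{r-i}(Z'_2)^{r-j}$ and pushing forward via $\rho^2$. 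The self-intersection formula $[\Delta^2(X)]\cdot\gamma=\Delta_*\Delta^*\gamma$ combined with the identity $\rho_*((Z')^k)=\delta_{k,r}$ for $0\le k\le r$ shows that $\alpha_{i,j}=[\Delta^2(Y)]$ if $i+j=r$ and $0$ otherwise, so
\begin{equation*}
[\Delta^2(X)]=(\rho^2)^*[\Delta^2(Y)]\cdot\sum_{i=0}^r(Z'_1)^i(Z'_2)^{r-i},
\end{equation*}
which $\Phi_2$ sends to $[\Delta^2(Y)]\cdot[\Delta^2(\PP^r)]=[\Delta^2(Y\times\PP^r)]$.

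Each cycle $\Delta^n_I(X;a)$ is the transverse intersection $\prod_{j\in I\setminus\{i_0\}}\pi_{i_0 j}^*[\Delta^2(X)]\cdot\prod_{k\notin I}\pi_k^*[a]$ (with $i_0:=\min I$), and the analogous identity holds on $(Y\times\PP^r)^n$. Since $\Phi_n$ is a ring homomorphism compatible with these projections, $\Phi_n[\Delta^n_I(X;a)]=[\Delta^n_I(Y\times\PP^r;(b,a_0))]$ for every nonempty $I\subset\{1,\ldots,n\}$. Summing with signs gives $\Phi_{m+r}(\Gamma^{m+r}(X;a))=\Gamma^{m+r}(Y\times\PP^r;(b,a_0))$, and the right-hand side vanishes rationally by \Ref{prp}{protokunn} applied to $\Gamma^m(Y;b)\equiv 0$ (the hypothesis) and $\Gamma^{r+1}(\PP^r;a_0)\equiv 0$ (\Ref{expl}{spapro}). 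Since $\Phi_{m+r}$ is a $\QQ$-algebra isomorphism, one concludes $\Gamma^{m+r}(X;a)\equiv 0$.

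The principal obstacle is the explicit computation of $[\Delta^2(X)]$ in the basis afforded by $Z'$; once the substitution $Z\rightsquigarrow Z'$ is seen to trivialize the $\PP^r$-fibration at the level of rational Chow rings, the remainder of the argument is purely formal. A crucial feature of the hypothesis $\dim Y=1$ is that it forces $c_1(\cF)^2=0$, which makes the normalization $Z\mapsto Z'$ available; in higher dimension the same substitution fails and one would need to keep track of higher Chern-class corrections, which is presumably why the author states the result only for $Y$ a curve.
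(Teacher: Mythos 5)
Your proof is correct, but it takes a genuinely different route from the paper's. The paper expands $[\Delta^{m+r}_I(X;a)]$ in the monomial basis $Z^H$ with coefficients $w_E(\cF)$ computed from Segre classes, observes that over a curve only the leading term and the $c_1(\cF)$-correction survive, kills the leading term by comparison with the trivial bundle (\Ref{rmk}{svanisce} plus \Ref{prp}{protokunn}), and then kills the $c_1(\cF)$-term by a separate cycle-theoretic cancellation (\Ref{clm}{sommalt}, using $\Gamma^m(Y;b)\equiv 0$ applied against the class $c_1(\cF)$, together with the combinatorial bound $|\Top H|\ge m-1$). You instead absorb the $c_1(\cF)$-correction once and for all into the normalization $Z'=Z-\tfrac{1}{r+1}\rho^{*}c_1(\cF)$, which is legitimate precisely because $c_1(\cF)^2=0$ on a curve, and thereby produce a $\QQ$-algebra isomorphism with the Chow ring of the trivial bundle that matches up $[a]$ with $[(b,a_0)]$ and $[\Delta^2(X)]$ with $[\Delta^2(Y\times\PP^r)]$; the identification of all the $\Delta^n_I$'s then follows formally from the transverse-product description, and the hypothesis $\Gamma^m(Y;b)\equiv 0$ enters only through \Ref{prp}{protokunn}. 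Your argument is cleaner and more structural for the curve case (it shows the stronger statement that the pointed $\PP^r$-fibration is indistinguishable from the trivial one at the level of rational Chow rings of all powers, compatibly with diagonals), while the paper's term-by-term method is the one that survives in dimension two, where $c_1(\cF)^2\ne 0$ blocks the normalization and the extra hypotheses of \Ref{prp}{rigsuperficie} become necessary — a limitation you correctly identify at the end.
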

\begin{prp}\label{prp:rigsuperficie}
Keep notation as above and suppose that $\dim Y=2$. If $\Gamma^{m-1}(Y;b)\equiv 0$, or $\Gamma^{m}(Y;b)\equiv 0$ and both $c_1(\cF)^2$, $c_2(\cF)$ are multiples of $[b]$, then  $\Gamma^{m+r}(X;a)\equiv 0$ .
\end{prp}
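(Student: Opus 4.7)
The plan is to adapt the strategy of the proof of~\Ref{prp}{protokunn} to the projective bundle setting by leveraging the projective bundle formula. The Chow ring $\CH(X^{m+r})_\QQ$ is free over $(\rho^{m+r})^{*}\CH(Y^{m+r})_\QQ$ on the monomials $Z_{\underline{e}}:=Z_1^{e_1}\cdots Z_{m+r}^{e_{m+r}}$ with $0\le e_i\le r$, where $Z_i:=\pi_i^{*}Z$. The first step is to derive, for each $\es\not=I\subset\{1,\ldots,m+r\}$, an explicit expression for $[\Delta^{m+r}_I(X;a)]$ in this basis. Using the identity $[a]=\rho^{*}[b]\cdot Z^r$ in $\CH_0(X)_\QQ$ together with the projective bundle formula for the relative small diagonal $\Delta^{|I|}(X/Y)$, one obtains a formula of the shape
\[
[\Delta^{m+r}_I(X;a)]=(\rho^{m+r})^{*}[\Delta^{m+r}_I(Y;b)]\cdot P_I(\underline{Z};\rho^{*}c_{\bullet}(\cF)),
\]
where $P_I$ is an explicit polynomial in $Z_1,\ldots,Z_{m+r}$ and the pulled back Chern classes of $\cF$.

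The second step is to invoke~\Ref{crl}{delsup} on $Y$ (under either of the two alternative hypotheses) to rewrite each large-support $[\Delta^{m+r}_I(Y;b)]$ as a linear combination of the small-support ones $[\Delta^{m+r}_J(Y;b)]$ with $1\le|J|\le(m-1)$ (respectively $1\le|J|\le(m-2)$ under the stronger hypothesis, using~\Ref{prp}{stabile} to upgrade $\Gamma^{m-1}$-vanishing to $\Gamma^m$-vanishing). Substituting into $\Gamma^{m+r}(X;a)=\sum_I(-1)^{m+r-|I|}[\Delta^{m+r}_I(X;a)]$ and grouping terms, the combinatorial identity~\eqref{combcomb} shows that the ``Chern-class-free part'' of the resulting coefficients vanishes. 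This parallels the core argument of~\Ref{subsec}{combinatorica}.

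The third step is to handle the error terms arising from the Chern class corrections in $P_I$. Since $\dim Y=2$, products of positive-degree Chern classes $c_1(\cF),c_2(\cF)$ already live in $\CH_0(Y)_\QQ$. Under the first hypothesis ($\Gamma^{m-1}(Y;b)\equiv 0$), \Ref{prp}{stabile} yields the vanishing of $\Gamma^{m-1+s}(Y;b)$ for all $s\ge 0$, so~\Ref{crl}{delsup} can be applied with one extra degree of slack, producing a second combinatorial identity that absorbs the corrections. Under the second hypothesis ($\Gamma^m(Y;b)\equiv 0$ and $c_1(\cF)^2$, $c_2(\cF)$ both multiples of $[b]$), the explicit Chern class condition directly rewrites each correction as a class supported on $\{b\}$-factors, which then fits into the main combinatorial sum without requiring extra slack.

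The main obstacle will be the bookkeeping for the Chern class corrections: one must verify that the explicit form of $P_I$ from the projective bundle formula combines cleanly with the output of~\Ref{crl}{delsup}, and that the resulting coefficients satisfy exactly the binomial identities needed for cancellation under each set of hypotheses. The overall structure mirrors the proof of~\Ref{prp}{protokunn}, but the projective-bundle twists introduce new terms whose interplay with the combinatorial identities must be checked with some care.
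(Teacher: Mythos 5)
Your overall strategy is the one the paper follows: expand each $\Delta^{m+r}_I(X;a)$ over $\CH(Y^{m+r})_{\QQ}$ via the projective bundle formula (this is \Ref{prp}{coefficienti} and \Ref{rmk}{esplicito}, leading to \eqref{barvitali}--\eqref{barsport}), kill the Chern-class-free part by a combinatorial cancellation (the paper actually short-circuits this step with \Ref{rmk}{svanisce}, observing that this part is independent of $\cF$ and vanishes for trivial $\cF$ by \Ref{prp}{protokunn}, rather than redoing the binomial identities), and then dispose of the Chern-class corrections using the two alternative hypotheses. Since $\dim Y=2$ the remainder $\cR$ of \eqref{brasile} vanishes, so the corrections are exactly the $c_1(\cF)$, $c_1(\cF)^2$ and $c_2(\cF)$ terms, and your dichotomy for the degree-two terms (extra slack coming from $\Gamma^{m-1}(Y;b)\equiv 0$ via \Ref{prp}{stabile}, versus the hypothesis that $c_1(\cF)^2$ and $c_2(\cF)$ are multiples of $[b]$ so that the coefficients need only sum to zero) is exactly the paper's.

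There is, however, one concrete gap: the linear $c_1(\cF)$ correction. You assert that ``products of positive-degree Chern classes $c_1(\cF),c_2(\cF)$ already live in $\CH_0(Y)_\QQ$'', but the term of \eqref{barvitali} proportional to $\delta^{|I|}_{Y,*}(c_1(\cF))$ involves $c_1(\cF)$ \emph{linearly}, and on a surface this is a divisor class, not a point class. Under your second hypothesis it is therefore not rewritten as a class supported on $\{b\}$-factors, and under the first hypothesis it is not covered by the ``extra slack'' you invoke for the quadratic terms; as written, this term falls through the cracks of both your step 2 (which treats only the Chern-class-free part) and your step 3. It must be killed separately, and the mechanism is the identity \eqref{sommalt} of \Ref{clm}{sommalt} (obtained by pushing forward $\Gamma^m(Y;b)\cdot\pi_m^{*}\gz$), which applies because for the relevant multi-indices $H$ with $|H|=r(m+r-1)-1$ one has $|\Top H|\ge m-1$; this uses only $\Gamma^m(Y;b)\equiv 0$, available under either hypothesis. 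Once you add this step (equation \eqref{mammamia} in the paper), your argument closes.
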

As an appplication we will prove the following.
\begin{prp}\label{prp:prodsim}
Suppose that $C$ is a smooth projective curve of genus  $g\le 2$ over an algebraically closed field $\KK$ and that $p\in C$ is such that $\dim|\cO_C(2p)| \ge 1$.  Then $\Gamma^{d+g+1}(C^{(d)};dp)\equiv 0$ for any $d\ge 0$.
\end{prp}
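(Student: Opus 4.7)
The plan is to split by genus $g$ and treat the geometry of $C^{(d)}$ case by case. The case $d = 0$ is trivial, and for $g = 0$ one has $C^{(d)} = \PP^d$, so the result reduces to \Ref{expl}{spapro}. For $g = 1$ and $d \ge 1$, the Abel-Jacobi morphism $C^{(d)} \to \Pic^d(C) \cong C$ is a $\PP^{d-1}$-bundle by Riemann-Roch on the elliptic curve, so combining \Ref{prp}{rigcurva} (with $m = 3$, $r = d - 1$) with the Gross-Schoen vanishing $\Gamma^3(C; q) \equiv 0$ (restated as \Ref{prp}{jaciper}(3)) yields the required equivalence.

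For $g = 2$ the hypothesis $\dim|\cO_C(2p)| \ge 1$ forces $p$ to be a Weierstrass point of the hyperelliptic curve $C$. For $d = 1$, stability \Ref{prp}{stabile} applied to Gross-Schoen gives $\Gamma^4(C; p) \equiv 0$. For $d = 2$, the statement is exactly \Ref{prp}{jaciper}(2); alternatively, one may observe that $AJ_2 \colon C^{(2)} \to \Pic^2(C)$ contracts the unique $g^1_2 = |K_C|$ to the point $[K_C]$ (realizing $C^{(2)}$ as the blow-up of $\Pic^2(C)$ at $[K_C]$), and $2p \in |K_C|$ lies on the exceptional $\PP^1$, so \Ref{prp}{blowdel} (with $n = 4$, $e = 2$, using $\Gamma^5(\Pic^2(C); [K_C]) \equiv 0$ from \Ref{prp}{jaciper}(3)) applies.

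The main case is $g = 2$, $d \ge 3$. Here $d \ge 2g - 1$, so $C^{(d)} \to J_d := \Pic^d(C)$ is a genuine $\PP^{d-2}$-bundle $\PP(\cF)$ with $\cF := \pi_*\cP_d$ and $\cP_d$ the Poincar\'e bundle on $C \times J_d$ (rigidified at $p$). I would apply \Ref{prp}{rigsuperficie} with $Y = J_d$, $b = [dp]$, $m = 5$, $r = d - 2$, so that $m + r = d + g + 1$. The hypothesis $\Gamma^5(J_d; [dp]) \equiv 0$ is \Ref{prp}{jaciper}(3), reducing everything to the Chern class condition: $c_1(\cF)^2$ and $c_2(\cF)$ are rational multiples of $[dp]$ in $\CH^2(J_d)_\QQ$. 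A Grothendieck-Riemann-Roch computation for $\pi \colon C \times J_d \to J_d$ — using that the mixed component $\gamma \in H^1(C) \otimes H^1(J_d)$ of $c_1(\cP_d)$ satisfies $\gamma^3 = 0$ since $H^3(C) = 0$ — yields $\ch(\cF) = (d-1) + \theta$ with $\theta := \pi_*(\gamma^2/2)$; hence $c_1(\cF) = \theta$, $c_2(\cF) = \theta^2/2$, and the problem reduces to checking $\theta^2 \equiv 2[dp]$ in $\CH^2(J_d)_\QQ$.

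For this last identification, I would exhibit $\theta$ as the class of the translated Abel-Jacobi curve $\Theta_d = \{[q + (d-1)p] : q \in C\}$ (a symmetric theta divisor about $[dp]$) and compute $\Theta_d^2$ using the degree-$2$ addition morphism $\mu \colon C \times C \to J_d$, $(q, r) \mapsto [q + r + (d-2)p]$. Analyzing the linear system $|s + p|$ (dimension $0$ for $s \ne p$ and dimension $1$ for $s = p$, since $|2p| = |K_C|$ is the hyperelliptic $g^1_2$) gives the divisorial identity $\mu^* \Theta_d = C \times \{p\} + \{p\} \times C + \Gamma_\iota$, where $\Gamma_\iota$ is the graph of the hyperelliptic involution and is collapsed by $\mu$ to $[dp]$. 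Using $\Gamma_\iota^2 = -2$ (via the isomorphism $1 \times \iota$ sending $\Gamma_\iota$ to the diagonal $\Delta_C$) together with the six transverse intersection contributions at $(p, p)$ coming from the three pairwise products, a direct push-pull calculation gives $2\Theta_d^2 = \mu_{*}((\mu^*\Theta_d)^2) = (6 - 2)[dp] = 4[dp]$, whence $\Theta_d^2 \equiv 2[dp]$. The main obstacle is precisely this Chern class verification, which hinges on $p$ being Weierstrass (so that $2p \sim K_C$ and $\Gamma_\iota$ passes through $(p, p)$ with collapse image $[dp]$).
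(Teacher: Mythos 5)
Your proof is correct and follows essentially the same route as the paper: genus $0$ via \Ref{expl}{spapro}, genus $1$ via \Ref{prp}{rigcurva} applied to the $\PP^{d-1}$-fibration over the Jacobian, genus $2$ with $d\le 2$ via Gross--Schoen and \Ref{prp}{jaciper}, and genus $2$ with $d\ge 3$ via \Ref{prp}{rigsuperficie} using that $c_1(\cE_d)^2$ and $c_2(\cE_d)$ are multiples of the class of $[dp]$ in the Jacobian. The only difference is that the paper quotes the Chern classes of $\cE_d$ from Example~4.3.3 of~\cite{fulton} and asserts $c_1(\cE_d)^2=2[{\bf 0}]$, whereas you verify these identities directly (GRR plus the pull-back of the theta divisor under the addition map $C\times C\to J_d$), which is a welcome filling-in of the detail where the Weierstrass hypothesis on $p$ actually enters.
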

\subsection{Comparing diagonals}
\setcounter{equation}{0}
Let  $\rho^n\colon X^n\to Y^n$ be the $n$-th cartesian product of $\rho$. Let $\pi_i\colon X^n\to X$ be the $i$-th projection and $Z_i:=\pi_i^{*}Z$. Given a multi-index $E=(e_1,\ldots,e_n)$ with $0\le e_i$ for $1\le i\le n$ we let $Z^E:=Z_1^{e_1}\cdot\ldots\cdot Z^{e_n}_n$. We let 
\begin{equation}
\max E:=\max\{e_1,\ldots,e_n\},\qquad |E|:=e_1+\ldots+e_n.
\end{equation}
Let $d:=\dim Y$ and $[\Delta^n(X)]\in\CH_{d+r}(X^n)$ be the class of the (smallest) diagonal. Since $\rho^n$ is a $(\PP^r)^n$-fibration we may write
\begin{equation}\label{darisolvere}
[\Delta^n(X)]=\sum_{\max E\le r}(\rho^n)^{*}(w_E(\cF))\cdot Z^E,\qquad w_E(\cF)\in \CH_{|E|+d-r(n-1)}(Y^n).
\end{equation}
In order to describe the classes $w_E$ we let $\delta^n_Y\colon Y\hra Y^n$ and  $\delta^n_X\colon X\hra X^n$ be the diagonal embeddings. 
\begin{prp}\label{prp:coefficienti}
Let $r\ge 0$ and  $E=(e_1,\ldots,e_n)$ be a multi-index. There exists  a universal  polynomial $P_E\in\QQ[x_1,\ldots,x_q]$, where $q:=(r(n-1)-|E|)$,  such that the following holds. Let $\cF$ be a locally-free sheaf of rank $(r+1)$ on $Y$: then (notation as above)   $w_E(\cF)=\delta^n_{Y,*}(P_E(c_1(\cF),\ldots,c_q(\cF))$. 
\end{prp}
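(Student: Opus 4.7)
The plan is as follows. Let $W:=X\times_Y\cdots\times_Y X=(\rho^n)^{-1}(\delta^n_Y(Y))\subset X^n$, with inclusion $j\colon W\hookrightarrow X^n$ and structure map $\tilde\rho\colon W\to Y$; the square formed by $j,\tilde\rho,\rho^n,\delta^n_Y$ is cartesian, and $\tilde\rho$ is a $(\PP^r)^n$-bundle. Since $\Delta^n(X)\subset W$, the class $[\Delta^n(X)]$ equals $j_*[\Delta^n(X)_W]$, and the projective bundle formula for $\tilde\rho$ gives a unique decomposition
\begin{equation*}
[\Delta^n(X)_W]=\sum_{\max E\le r}\tilde\rho^{*}(v_E(\cF))\cdot Z^E|_W,\qquad v_E(\cF)\in\CH^q(Y).
\end{equation*}
Applying $j_{*}$ together with the projection formula and the flat base-change identity $j_{*}\tilde\rho^{*}=(\rho^n)^{*}\delta^n_{Y,*}$ (valid because $\rho^n$ is flat and $\delta^n_Y$ is proper) and then comparing with (3.2), I obtain $w_E(\cF)=\delta^n_{Y,*}(v_E(\cF))$.

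Naturality of $v_E$ is then automatic: for $f\colon Y'\to Y$ and $\cF':=f^{*}\cF$, the entire setup for $\cF'$ is the base change of that for $\cF$, so $[\Delta^n(X')_{W'}]$ pulls back to $[\Delta^n(X)_W]$, the tautological classes $Z'_i$ pull back to the $Z_i$, and the uniqueness of the projective bundle decomposition in $W'$ forces $v_E(\cF')=f^{*}v_E(\cF)$.

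Finally I would extract a universal polynomial via the splitting principle. Pass to the complete flag bundle $g\colon \mathrm{Fl}(\cF)\to Y$, for which $g^{*}$ is injective on rational Chow groups and $g^{*}\cF$ is filtered by line bundles with first Chern classes $\ell_1,\ldots,\ell_{r+1}$. By naturality it suffices to show that $v_E(g^{*}\cF)$ is a symmetric polynomial in the $\ell_j$ of weighted degree $q$ whose rational coefficients depend only on $(r,n,E)$. For split $\cF$ the Chow ring of $W=\PP(\cF)^{\times_Y n}$ is the tensor product over $\CH(Y)$ of the quotients $\CH(Y)[Z_i]/\prod_j(Z_i+\ell_j)$, and iterated application of the standard formula $[\Delta_{\PP(\cF)/Y}]=c_r(p_1^{*}\cO(1)\otimes p_2^{*}Q)$ (with $Q$ the tautological quotient and its Chern classes re-expressed via $c(Q)=c(\tilde\rho^{*}\cF)\cdot c(\cO(-1))^{-1}$) writes $[\Delta^n(X)_W]$ as an explicit polynomial in the $Z_i$ and the $\ell_j$ with universal rational coefficients; the coefficient of each $Z^E$ is then a polynomial in the $\ell_j$ of degree $q$ with universal rational coefficients. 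Symmetry under permutations of the $\ell_j$ follows because $v_E(g^{*}\cF)=g^{*}v_E(\cF)$ is pulled back from $Y$, hence invariant under the Weyl-group action on $\CH(\mathrm{Fl}(\cF))$ which permutes the $\ell_j$. Rewriting through the elementary symmetric functions $c_i(\cF)=e_i(\ell_1,\ldots,\ell_{r+1})$ for $i\le q$ supplies the universal polynomial $P_E\in\QQ[x_1,\ldots,x_q]$.

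The main obstacle is the combinatorial bookkeeping in the splitting-principle step: carrying out the $n$-fold iterated expansion of $[\Delta^n(X)_W]$ for a split $\cF$ in enough detail to confirm polynomiality (not just rationality) of the coefficients in the $\ell_j$ and to match the expected weighted degree $q=r(n-1)-|E|$. The support and naturality steps are formal consequences of the projective bundle formula and cartesian base change.
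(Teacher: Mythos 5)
Your argument is correct, but it takes a genuinely different route from the paper's. The paper's proof pairs $[\Delta^n(X)]$ against the dual monomials $Z^{E^{\vee}}$, $E^{\vee}=(r-e_1,\ldots,r-e_n)$, and pushes forward along $\rho^n$: on one hand $\rho^n_{*}([\Delta^n(X)]\cdot Z^{E^{\vee}})=\delta^n_{Y,*}(s_{|E^{\vee}|-r}(\cF))$, on the other hand expanding~\eqref{darisolvere} produces $w_E(\cF)$ plus terms $w_H(\cF)\cdot\pi_1^{*}(s_{h_1-e_1})\cdots\pi_n^{*}(s_{h_n-e_n})$ with $|H|>|E|$; descending induction on $|E|$ solves this triangular system and exhibits each $w_E$ as $\delta^n_{Y,*}$ of a universal polynomial in Segre, hence Chern, classes. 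You instead localize on the fibre product $W=X\times_Y\cdots\times_Y X$, where flat base change ($j_{*}\tilde\rho^{*}=(\rho^n)^{*}\delta^n_{Y,*}$) gives the $\delta^n_{Y,*}$-shape of $w_E$ essentially for free, and you obtain universality from the formula $[\Delta_{\PP(\cF)/Y}]=c_r(p_1^{*}\cO(1)\otimes p_2^{*}Q)$ applied to the $n-1$ pairwise relative diagonals cutting out $\Delta^n(X)$ inside $W$; the only point to make explicit in your ``bookkeeping'' step is that these are regular sections meeting properly and with multiplicity one along the small diagonal, so that the product of top Chern classes really computes $[\Delta^n(X)_W]$ (and note that this already yields polynomials in the $c_i(\cF)$ via $c(Q)=c(\tilde\rho^{*}\cF)\cdot c(\cO(-1))^{-1}$, so the detour through the flag bundle and Weyl invariance is not strictly needed). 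The trade-off: your approach makes the support statement transparent and conceptual, while the paper's recursion is precisely the iterative algorithm invoked in \Ref{rmk}{esplicito} to compute $w_E(\cF)$ for $|E|\ge r(n-1)-2$, which is what the subsequent arguments actually consume; your Chern-class product would produce the same explicit formulae after expansion and reduction modulo the Grothendieck relations.
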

\begin{proof}
Let  $s_i(\cF)$ be the $i$-th Segre class of $\cF$ and $E^{\vee}:=(r-e_1,\ldots,r-e_n)$. Then
\begin{equation}\label{kyenge}
\rho^n_{*}([\Delta^n(X)]\cdot  Z^{E^{\vee}})=\delta_{Y,*}^n(s_{|E^{\vee}|-r}(\cF)).
\end{equation}
(By convention $s_{i}(\cF)=0$ if $i<0$.)
On the other hand  let $J=(j_1,\ldots,j_n)$ be a multi-index: then
\begin{equation}\label{ogbonna}
\rho^n_{*}\left(\left(\sum_{\max H\le r}(\rho^n)^{*}(w_H(\cF))\cdot Z^H\right)\cdot Z^{J}\right)=
\sum_{\max H\le r} w_{H}(\cF)\cdot \pi_1^{*}(s_{h_1+j_1-r})\cdot\ldots\cdot \pi_n^{*}(s_{h_n+j_n-r}).
\end{equation}
Equations~\eqref{kyenge} and~\eqref{ogbonna} give that
\begin{multline}\label{esame}
\delta_{Y,*}^n(s_{|E^{\vee}|-r}(\cF))=\rho^n_{*}([\Delta^n(X)]\cdot  Z^{E^{\vee}})=
\sum_{\max H\le r} w_{H}(\cF)\cdot \pi_1^{*}(s_{h_1-e_1})\cdot\ldots\cdot \pi_n^{*}(s_{h_n-e_n})= \\
=w_{E}(\cF)+\sum_{\substack{|H|>|E| \\ r\ge \max H}} w_{H}(\cF)\cdot \pi_1^{*}(s_{h_1-e_1})\cdot\ldots\cdot \pi_n^{*}(s_{h_n-e_n}).
\end{multline}
Starting from the highest possible value of $|E|$ i.e.~$rn$ and going through descending values of $|E|$ one gets the proposition. 
\end{proof}
\begin{rmk}\label{rmk:esplicito}
The proof of~\Ref{prp}{coefficienti} gives an iterative algorithm for the computation of $w_E(\cF)$. A straightforward computation gives the   formulae
\begin{equation*}\label{esplicito}
w_E(\cF)=
\begin{cases}
0 & \text{if $|E|>r(n-1)$}, \\
[\Delta^n(Y)] & \text{if $|E|=r(n-1)$}, \\
(\lambda_E(1)-1)\delta^n_{Y,*}( c_1(\cF))& \text{if $|E|=r(n-1)-1$}, \\
\frac{1}{2}(\lambda_E(1) -1)(\lambda_E(1) -2)\delta^n_{Y,*}( c_1(\cF)^2) + 
(\lambda_E(2) -1 ) \delta^n_{Y,*}(c_2(\cF))& \text{if $|E|=r(n-1)-2$},
\end{cases}
\end{equation*}
where 
\begin{equation}
\lambda_E(p):=|\{ 1\le i\le n \mid  e_i+p\le r \}|. 
\end{equation}
\end{rmk}
\subsection{Comparing modified diagonals}
\setcounter{equation}{0}
We  will compare $\Gamma^{m+r}(X;a)$ and $\Gamma^{m+r}(Y;b)$. In the present subsection  
 $\es\not=I\subset\{1,\ldots,m+r\}$ and $I^c:=(\{1,\ldots,m+r\}\setminus I)$; we let $\pi_I\colon X^{m+r}\to X^{|I|}$ be the projection determined by $I$. We also let $H=(h_1,\ldots,h_{m+r})$ be a multi-index. If  $\max H\le r$ we let $\Top H:=\{1\le i\le n \mid h_i=r\}$.  Applying~\Ref{prp}{coefficienti} and~\Ref{rmk}{esplicito}  we get that
\begin{multline}\label{barvitali}
\Delta^{m+r}_I(X;a)=
(\rho^{m+r})^{*}(\Delta^{m+r}_I(Y;{b}))\cdot  \sum_{\substack{\max H\le r \\ |H|=r(m+r-1) \\ I^c\subset\Top H }} Z^H+ \\ 
+(\rho^{m+r})^{*}\left(\pi_I^{*}\delta^{|I|}_{Y,*}(c_1(\cF))\times\pi_{I^c}^{*}(\underbrace{{b}\times\ldots\times{b}}_{|I^c|})\right)
\cdot\sum_{\substack{\max H\le r \\ |H|=r(m+r-1)-1 \\  I^c\subset\Top H }} (\lambda_H(1)-1) Z^H+ \\
+(\rho^{m+r})^{*}\left(\pi_I^{*}\delta^{|I|}_{Y,*}(c_1(\cF)^2)\times\pi_{I^c}^{*}(\underbrace{{b}\times\ldots\times{b}}_{|I^c|})\right)
\cdot\sum_{\substack{\max H\le r \\ |H|=r(m+r-1)-2  \\ I^c\subset\Top H}} \frac{1}{2}(\lambda_H(1)-1)(\lambda_H(1)-2) Z^H+\\
+(\rho^{m+r})^{*}\left(\pi_I^{*}\delta^{|I|}_{Y,*}(c_2(\cF))\times\pi_{I^c}^{*}(\underbrace{{b}\times\ldots\times{b}}_{|I^c|})\right)
\cdot\sum_{\substack{\max H\le r \\ |H|=r(n-1)-2   \\ I^c\subset\Top H}} (\lambda_H(2)-1) Z^H+\cR
\end{multline}
where
\begin{equation}\label{brasile}
\cR=\sum_{\substack{\max H\le r \\ |H|<r(n-1)-2}}Q_H Z^H
\end{equation}
and each $Q_H$ appearing in~\eqref{brasile} vanishes if the Chern classes of $\cF$ of degree higher than $2$ are zero. 
It follows that 
\begin{multline}\label{barsport}
\Gamma^{m+r}(X;a)=
\sum_{\substack{\max H\le r \\ |H|=r(m+r-1)}}(\rho^{m+r})^{*}\left(\sum_{I^c\subset\Top H}(-1)^{m+r-|I|}
\Delta^{m+r}_I(Y;{b})\right)\cdot   Z^H+ \\ 
+\sum_{\substack{\max H\le r \\ |H|=r(m+r-1)-1}}(\rho^{m+r})^{*}\left(\sum_{I^c\subset\Top H}(-1)^{m+r-|I|}
\left(\pi_I^{*}\delta^{|I|}_{Y,*}(c_1(\cF))\times\pi_{I^c}^{*}(\underbrace{{b}\times\ldots\times{b}}_{|I^c|})\right)\right)
\cdot \epsilon_H Z^H+ \\
+\sum_{\substack{\max H\le r \\ |H|=r(m+r-1)-2}}(\rho^{m+r})^{*}\left(\sum_{I^c\subset\Top H}(-1)^{m+r-|I|}
\left(\pi_I^{*}\delta^{|I|}_{Y,*}(c_1(\cF)^2)\times\pi_{I^c}^{*}(\underbrace{{b}\times\ldots\times{b}}_{|I^c|})\right)\right)
\cdot \mu_H Z^H+\\
+\sum_{\substack{\max H\le r \\ |H|=r(m+r-1)-2}}(\rho^{m+r})^{*}\left(\sum_{I^c\subset\Top H}(-1)^{m+r-|I|}
\left(\pi_I^{*}\delta^{|I|}_{Y,*}(c_2(\cF))\times\pi_{I^c}^{*}(\underbrace{{b}\times\ldots\times{b}}_{|I^c|})\right)\right)
\cdot \nu_H Z^H+\cT
\end{multline}
where $\epsilon_H:=(\lambda_H(1)-1)$, $\mu_H:=(\lambda_H(1)-1)(\lambda_H(1)-2)/2$, $\nu_H:=(\lambda_H(2)-1)$, and $\cT$ has an expansion similar to that of $\cR$, see~\eqref{brasile} and the comment following it.
\begin{rmk}\label{rmk:svanisce}
Suppose that $\Gamma^m(Y;b)=0$. Then the  first addend on the right-hand side of~\eqref{barsport} vanishes. In fact it is clearly independent of the rank-$r$ locally-free sheaf $\cF$ and it is $0$ for trivial $\cF$ by~\Ref{prp}{protokunn}: it follows that it vanishes.
\end{rmk}
\subsection{$\PP^r$-bundles over curves}\label{subsec:rigcurva}
\setcounter{equation}{0}
We will prove~\Ref{prp}{rigcurva}. We start with an auxiliary result.
\begin{clm}\label{clm:sommalt}
Let $Y$ be a smooth projective variety and $b\in Y$. Suppose that $\Gamma^m(Y;b)=0$. Let $\gz\in\CH(Y)$: then
\begin{equation}\label{sommalt}
\sum_{I\subset\{1,\ldots,(m-1)\}}(-1)^{|I|}\pi_I^{*}\delta_{Y,*}^{|I|}(\gz)\times\pi_{I^c}^{*}(\underbrace{b,\ldots,b}_{|I^c|})=0.
\end{equation}
\end{clm}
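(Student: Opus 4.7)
The plan is to push forward the relation $\Gamma^m(Y;b)\cdot\pi_m^{*}\gz\equiv 0$ along the projection $\pi_{1,\ldots,m-1}\colon Y^m\to Y^{m-1}$ forgetting the last factor, and to recognise the result as $(-1)^{m-1}$ times the left-hand side of~\eqref{sommalt}.

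First I would write each summand $[\Delta^m_J(Y;b)]=\pi_J^{*}[\Delta^{|J|}(Y)]\cdot\pi_{J^c}^{*}[(b,\ldots,b)]$ and compute $[\Delta^m_J(Y;b)]\cdot\pi_m^{*}\gz$ in two cases. If $m\notin J$, then $m\in J^c$ and the product carries the factor $\pi_m^{*}([b]\cdot\gz)$, which vanishes since $[b]\cdot\gz\in\CH^{n+\cod(\gz)}(Y)=0$ for $\gz$ of positive codimension---the only case relevant to the application~\eqref{barsport}. If $m\in J$, the projection formula for the diagonal $\delta_Y^{|J|}\colon Y\hra Y^{|J|}$ gives $\pi_J^{*}[\Delta^{|J|}(Y)]\cdot\pi_m^{*}\gz=\pi_J^{*}\delta_{Y,*}^{|J|}(\gz)$, so that
\[
[\Delta^m_J(Y;b)]\cdot\pi_m^{*}\gz=\pi_J^{*}\delta_{Y,*}^{|J|}(\gz)\cdot\pi_{J^c}^{*}[(b,\ldots,b)].
\]

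Next I would push forward under $\pi_{1,\ldots,m-1}$. When $m\in J$ and $|J|\ge 2$, setting $I:=J\setminus\{m\}\subset\{1,\ldots,m-1\}$, the projection restricted to the cycle is an isomorphism onto its image (the $m$-th coordinate is determined by those indexed by $I$), and that image is $\pi_I^{*}\delta_{Y,*}^{|I|}(\gz)\cdot\pi_{I^c}^{*}[(b,\ldots,b)]$ on $Y^{m-1}$, with $\es\not=I$. When $J=\{m\}$, the projection restricted to the cycle is constant with value $(b,\ldots,b)$, so the push-forward equals $\deg(\gz)\cdot[(b,\ldots,b)]$ when $\gz$ is $0$-dimensional and vanishes otherwise; under the natural convention $\delta_{Y,*}^{0}(\gz)=\int_Y\gz$ this is precisely the $I=\es$ summand of~\eqref{sommalt}.

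Collecting everything, the sign of the $J$-summand is $(-1)^{m-|J|}=(-1)^{m-1}(-1)^{|I|}$ for $I=J\setminus\{m\}$, whence
\[
\pi_{1,\ldots,m-1,*}\bigl(\Gamma^m(Y;b)\cdot\pi_m^{*}\gz\bigr)=(-1)^{m-1}\sum_{I\subset\{1,\ldots,m-1\}}(-1)^{|I|}\pi_I^{*}\delta_{Y,*}^{|I|}(\gz)\cdot\pi_{I^c}^{*}[(b,\ldots,b)],
\]
and the left-hand side is rationally equivalent to zero because $\Gamma^m(Y;b)\equiv 0$. The main obstacle is purely combinatorial book-keeping: aligning signs, correctly identifying the push-forward of each diagonal-type summand, and disposing of the boundary cases $J=\{m\}$ and $I=\es$; the observation that positive codimension of $\gz$ forces the $m\notin J$ contributions to drop out is the geometric input that makes the argument go through.
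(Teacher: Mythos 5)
Your proof is correct and is exactly the paper's argument: push forward $\Gamma^m(Y;b)\cdot\pi_m^{*}\gz$ along the projection to the first $(m-1)$ factors and match the surviving terms with the left-hand side of~\eqref{sommalt} (up to an overall sign, which is irrelevant since the conclusion is a vanishing). Your remark that the $m\notin J$ contributions only drop out because $\gz$ has positive codimension is a genuine, if harmless, refinement the paper glosses over --- as stated for arbitrary $\gz\in\CH(Y)$ the claim would fail for the codimension-$0$ component, but only positive-codimension classes occur in the application.
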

\begin{proof}
Let $\pi_{\{1,\ldots,(m-1)\}}\colon Y^m\to Y^{m-1}$ be the projection to the first $(m-1)$ coordinates. Then
\begin{equation}\label{sgargiante}
\pi_{\{1,\ldots,(m-1)\},*}(\Gamma^m(Y;b)\cdot\pi_m^{*}\gz)=0.
\end{equation}
The claim follows because the left-hand side of~\eqref{sgargiante} equals the left-hand side of~\eqref{sommalt} multiplied by $(-1)^m$.
\end{proof}
By~\eqref{barsport} and~\Ref{rmk}{svanisce} we must prove that if $H=(h_1,\ldots,h_{m+r})$ is a multi-index such that $\max H\le r$ and $|H|=r(m+r-1)-1$  then
\begin{equation}\label{mammamia}
\sum_{I^c\subset\Top H}(-1)^{m+r-|I|}
\pi_I^{*}\delta^{|I|}_{Y,*}(c_1(\cF))\times\pi_{I^c}^{*}(\underbrace{b,\ldots,b}_{|I^c|})=0.
\end{equation}
A straightforward computation shows that $|\Top H|\ge(m-1)$: thus~\eqref{mammamia} holds by~\Ref{clm}{sommalt}. 
\qed
\subsection{$\PP^r$-bundles over surfaces}
\setcounter{equation}{0}
We will prove~\Ref{prp}{rigsuperficie}. Notice that $\Gamma^m(Y;b)=0$: in fact  it holds either by hypothesis or by~\Ref{prp}{stabile} if  $\Gamma^{m-1}(Y;b)=0$.  
Moreover~\eqref{mammamia} holds in this case as well, the argument is that given in~\Ref{subsec}{rigcurva}. Thus~\eqref{barsport} and~\Ref{rmk}{svanisce} give that we must prove the following: if $H=(h_1,\ldots,h_{m+r})$ is a multi-index such that $\max H\le r$ and $|H|=r(m+r-1)-2$  then
\begin{equation}\label{papas}
\sum_{I^c\subset\Top H}(-1)^{m+r-|I|}
\left(\pi_I^{*}\delta^{|I|}_{Y,*}(\mu_H c_1(\cF)^2+\nu_H c_2(\cF))\times\pi_{I^c}^{*}(\underbrace{b,\ldots,b}_{|I^c|})\right)=0.
\end{equation}
A straightforward computation shows that $|\Top H|\ge(m-2)$ and that equality holds if and only if $(r-1)\le h_i\le r$ for all $1\le i\le (m+r)$ (and thus the set of indices $i$ such that $h_i=(r-1)$ has cardinality $(r+2)$). If $\Gamma^{m-1}(Y;b)=0$ then~\eqref{papas} holds by~\Ref{clm}{sommalt}. If  both $c_1(\cF)^2$, $c_2(\cF)$ are multiples of $b$ then each term in the summation in the left-hand side of~\eqref{papas} 
is a multiple of $b$ and the coefficients sum up to $0$.  
\qed
\subsection{Symmetric products of curves}
\setcounter{equation}{0}
If the genus of $C$ is $0$ then $C^{(d)}\cong\PP^d$ and hence the result holds trivially, see~\Ref{expl}{spapro}. Suppose that the genus of $C$ is $1$. If $d=1$ then $\Gamma^3(C;p)\equiv 0$ by~\cite{groscho}. Let $d>1$ and let $u_d\colon C^{(d)}\to \Pic^0(C)$ be the map sending $D$ to $[D-d p]$. Since $u_d$ is $\PP^{d-1}$-fibration we get that $\Gamma^{d+2}(C;dp)\equiv 0$ by~\Ref{prp}{prodsim} and the equivalence $\Gamma^3(C;p)\equiv 0$. Lastly suppose that  the genus of $C$ is $2$. If $d=1$ then $\Gamma^3(C;p)\equiv 0$ by~\cite{groscho} and if $d=2$ then $\Gamma^5(C^{(2)};2p)\equiv 0$ by~\Ref{prp}{jaciper}. Now assume that $d>2$ and let $u_d\colon C^{(d)}\to \Pic^0(C)$ be the map sending $D$ to $[D-d p]$. Then $u_d$ is $\PP^{d-2}$-fibration and we may write $C^{(d)}\cong\PP(\cE_d)$ where $\cE_d$ is a locally-free sheaf on $\Pic^0(C)$ such that 
\begin{equation}
c_1(\cE_d)=-[\{[x-p] \mid x\in C\}],\qquad c_2(\cE_d)=[{\bf 0}],
\end{equation}
see Example~4.3.3 of~\cite{fulton}. By~\Ref{prp}{jaciper}  we have  $\Gamma^5(J(C);{\bf 0})\equiv 0$; since $c_1(\cE_d)^2=2[{\bf 0}]$ we get that $\Gamma^{d+2}(C^{(d)};dp)\equiv 0$ by~\Ref{prp}{prodsim}.
\section{Blow-ups}\label{sec:scoppio}
We will prove~\Ref{prp}{blowdel}.
A comment regarding the hypotheses of~\Ref{prp}{blowdel}. 
Let $Y$ be a complex $K3$ surface  and $X\to Y$ be the blow-up of $y\in Y$. We know (Beauville and Voisin) that there exists $c\in Y$ such that $\Gamma^3(Y;c)\equiv 0$, but if $y$ is not rationally equivalent to $c$ then there exists no $a\in X$ such that 
$\Gamma^3(X;a)\equiv 0$, this follows from~\Ref{rmk}{unicopunto}. If  $e=0,1$ then~\Ref{prp}{blowdel} is trivial, hence we will assume that $e\ge 2$. We let $f\colon X\to Y$ be the blow-up of $V$ and $E\subset X$  the exceptional divisor of $f$. Thus $a\in E$. Let $g\colon E\to V$ be defined by the restriction of $f$ to $E$, and $(E/V)^t$ be the $t$-th fibered product of $g\colon E\to V$. Let $(E/V)^{t}$ be the $t$-th fibered product of $g\colon E\to V$. The following commutative diagram will play a r\^ole in the proof of~\Ref{prp}{blowdel}
\begin{equation}
\xymatrix{ (E/V)^{t} \ar_{\gamma_{t}}[r] \ar@/{}_{-1pc}/^{\alpha_{t}}[urrd] \ar_{}[d] & E^{t} \ar_{\beta_{t}}[r] \ar_{g^{t}}[d] & X^{t} \ar_{f^{t}}[d] \\
\Delta^{t}(V) \ar^{}[r]  & V^{t} \ar^{}[r] & Y^{t} } 
\end{equation}
(The maps which haven't  been defined are the natural ones.) Whenever there is no danger of confusion we denote  $\alpha_{t}((E/V)^{t})$ by $(E/V)^{t}$.
\subsection{Pull-back of the modified diagonal.}
\setcounter{equation}{0}
 On $E$ we have an exact sequence of locally-free sheaves:
\begin{equation}
0\lra\cO_E(-1)\lra g^{*}N_{V/Y}\lra Q\lra 0.
\end{equation}
For $i=1,\ldots,t$ let $Q_i(t)$ be the pull-back of $Q$ to $E^{t}$ via the $i$-th projection $E^{t}\to E$: thus $Q_i(t)$ is locally-free of rank $(e-1)$. 
\begin{prp}\label{prp:eccesso}
Keep notation as above and let $d({t}):=({t}-1)(e-1)-1$. We have the following equalities  in $\CH_{\dim X}(X^{t})$:
\begin{equation}\label{eccesso}
(f^{t})^{*}\Delta^{t}(Y)=
\begin{cases}
\Delta^{t}(X) & \text{if ${t}=1$,} \\
\Delta^{t}(X)+\beta_{{t},*}((g^{t})^{*}(\Delta^{t}(V))\cdot c_{d({t})}(\oplus_{j=1}^{t} Q_j({t}))) & \text{if ${t}> 1$.}
\end{cases}
\end{equation}
\end{prp}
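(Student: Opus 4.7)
The case $t=1$ is immediate. For $t \ge 2$, my plan is to apply Fulton's excess intersection formula to the Cartesian square
\[
\begin{array}{ccc}
(f^t)^{-1}(\Delta^t(Y)) & \hookrightarrow & X^t \\
\downarrow & & \downarrow f^t \\
\Delta^t(Y) & \hookrightarrow & Y^t,
\end{array}
\]
whose bottom row is a regular embedding of codimension $(t-1)\dim Y$ with normal bundle $T_Y^{\oplus(t-1)}$. The set-theoretic preimage $(f^t)^{-1}(\Delta^t(Y))$ splits into two irreducible components: the small diagonal $\Delta^t(X)$, on which $f^t$ is generically a local isomorphism, and the exceptional component $\alpha_t((E/V)^t)$ consisting of tuples $(x_1,\dots,x_t) \in E^t$ with a common image in $V$. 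This decomposition comes from the fact that $f$ is an isomorphism over $Y \setminus V$ and contracts the $\PP^{e-1}$-fibers of $g\colon E \to V$.

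Next I would compute the excess contribution from each component. For $\Delta^t(X)$ the codimension in $X^t$ equals $(t-1)\dim X = (t-1)\dim Y$, matching the codimension of $\Delta^t(Y)$ in $Y^t$; since $f^t$ is a local isomorphism away from $E^t$, the excess bundle on $\Delta^t(X)$ has rank $0$ and the contribution is simply $[\Delta^t(X)]$. For $(E/V)^t$ the codimension in $X^t$ is $t+(t-1)\dim V$, so the excess bundle has rank $(t-1)(e-1)-1=d(t)$. I would identify this excess bundle using the normal bundle identifications $N_{E/X}\cong\cO_E(-1)$ and $N_{(E/V)^t/E^t}\cong (g^t)^{\!\ast}N_{\Delta^t(V)/V^t}$, together with the tautological sequence $0 \to \cO_E(-1) \to g^*N_{V/Y} \to Q \to 0$ pulled back via each of the $t$ projections. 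A Whitney-formula computation should then express the degree-$d(t)$ piece of the excess Chern class as $c_{d(t)}(\bigoplus_{j=1}^{t}Q_j(t))$ restricted to $(E/V)^t$, so that after capping with $(g^t)^*[\Delta^t(V)]$ and pushing forward by $\beta_t$ one obtains the claimed formula.

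The main obstacle will be this final Chern-class identification on $(E/V)^t$. In K-theory one finds that the virtual excess class is $(t-1)\gamma^*[N_{V/Y}]-\sum_{j=1}^{t}[\pi_j^*\cO_E(-1)]$, where $\gamma\colon (E/V)^t\to V$ is the natural projection, whereas the bundle $\bigoplus_{j=1}^{t}Q_j(t)$ has total Chern class differing from that of the virtual excess by a factor of $c(\gamma^*N_{V/Y})$. One must therefore show that, when capped with $(g^t)^*[\Delta^t(V)]$ and pushed forward by $\beta_t$, the extra contributions involving $c_{\bullet}(\gamma^*N_{V/Y})$ collapse; the relevant input is that on $(E/V)^t$ the $t$ pullbacks $\pi_j^*g^*N_{V/Y}$ are canonically identified with the common bundle $\gamma^*N_{V/Y}$, which reflects the symmetric role played by the indices $j=1,\dots,t$ in the definition of the class on the right-hand side.
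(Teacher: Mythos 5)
Your strategy is the same as the paper's: decompose $(f^{t})^{-1}(\Delta^{t}(Y))$ into $\Delta^{t}(X)$ and $\alpha_{t}((E/V)^{t})$, observe that only the second component is excessive, localize away from their overlap $\Delta^{t}(X)\cap(E/V)^{t}$ (which has dimension $\dim X-1$, so does not affect classes in $A_{\dim X}$), and apply the excess intersection formula to the smooth component $(E/V)^{t}$. The paper does exactly this, citing Fulton's obstruction-bundle formula on the open set $U=X^{t}\setminus(\Delta^{t}(X)\cap(E/V)^{t})$ and asserting that the obstruction bundle is $\bigoplus_{j}Q_{j}(t)$.

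The step you flag as ``the main obstacle'' is a genuine gap, and it cannot be closed: the extra contributions involving $c_{\bullet}(\gamma^{*}N_{V/Y})$ do \emph{not} collapse. Your K-theory bookkeeping is correct: the true excess class on $(E/V)^{t}$ is $(t-1)\gamma^{*}[N_{V/Y}]-\sum_{j}[\pi_{j}^{*}\cO_E(-1)]$, of rank $d(t)$, while $\bigoplus_{j}Q_{j}(t)$ restricts to $t\gamma^{*}[N_{V/Y}]-\sum_{j}[\pi_{j}^{*}\cO_E(-1)]$, of rank $t(e-1)=d(t)+e$; hence $c_{d(t)}\bigl(\bigoplus_{j}Q_{j}(t)\bigr)-c_{d(t)}(\mathrm{excess})=\sum_{i\ge 1}c_{d(t)-i}(\mathrm{excess})\cdot\gamma^{*}c_{i}(N_{V/Y})$, and this survives capping with $[(E/V)^{t}]$ in general. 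A concrete test: take $Y=\PP^{3}$, $V=\ell$ a line, so $e=2$ and $N_{V/Y}=\cO_{\ell}(1)^{\oplus 2}$, and take $t=3$, so $d(3)=1$. Pair both sides of \eqref{eccesso} with $\pi_{1}^{*}[E]\cdot\pi_{2}^{*}[E]\cdot\pi_{3}^{*}[E]$. The left-hand side gives $0$ by the projection formula, since $f_{*}[E]=0$; the term $[\Delta^{3}(X)]$ gives $\deg E^{3}=\deg_{E}\xi^{2}=\deg s_{1}(N_{V/Y})=-2$; and the correction term as written gives $-\deg_{(E/\ell)^{3}}\bigl((6\gamma^{*}H+\xi_{1}+\xi_{2}+\xi_{3})\,\xi_{1}\xi_{2}\xi_{3}\bigr)=-(6-6)=0$, so the right-hand side equals $-2\neq 0$. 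With the true excess class one gets $-(4-6)=+2$ instead, and the identity checks. So \eqref{eccesso} holds only when the positive-degree Chern classes of $N_{V/Y}$ in degrees $\le d(t)$ contribute nothing (e.g.\ $V$ a point, which is why the $c_{0}$ cases $t=e=2$ and the rank count both look fine), and the correct general statement replaces $c_{d(t)}\bigl(\bigoplus_{j}Q_{j}(t)\bigr)$ by the degree-$d(t)$ Chern class of the virtual bundle $\bigoplus_{j}Q_{j}(t)-\pi_{1}^{*}g^{*}N_{V/Y}$. Note that the paper's own proof disposes of this point with ``one easily identifies $\cN$ with the restriction of $\oplus_{j}Q_{j}(t)$'', which cannot be an identification of bundles since the ranks differ by $e$; your more careful computation has located a real defect in the stated formula, not merely in your own argument, and any use of the proposition downstream needs the corrected excess class.
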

\begin{proof}
The equality of schemes $f^{-1}\Delta^1(Y)=\Delta^1(X)$ gives~\eqref{eccesso}  for ${t}=1$. Now let's assume that ${t}>1$. The closed set $(f^{t})^{-1}\Delta^{t}(Y)$ has the following decomposition into irreducible components: 
\begin{equation}
(f^{t})^{-1}\Delta^{t}(Y)=\Delta^{t}(X)\cup (E/V)^{t}. 
\end{equation}
The  dimension  of  $(E/V)^{t}$ is equal to $(\dim X+({t}-1)(e-1)-1)$ and hence  is larger than the expected dimension unless unless $2={t}=e$. It follows  that if  ${t}=2$ and $e=2$ then $(f^2)^{*}\Delta^2(Y)=a\Delta^2(X)+b (E/V)^2$:  one checks easily that $1=a=b$ and hence~\eqref{eccesso}  holds if ${t}=2$ and $e=2$. Now suppose that that ${t}>1$ and $({t},e)\not=(2,2)$. Let $U:=(X^{t}\setminus(\Delta^{t}(X)\cap (E/V)^{t}))$ and $\cZ:= (E/V)^{t} \cap U=(E/V)^{t} \setminus \Delta^{t}(X)$. Notice that $(E/V)^{t}$ is smooth and hence  the open subset $\cZ$ is smooth as well. Let $\iota\colon \cZ\hra U$ be the inclusion. The restriction of $(f^{t})^{*}\Delta^{t}(Y)$ to $U$ is equal to 
\begin{equation}
[\Delta^{t}(X)\cap U]+\iota_{*}(c_d({t})(\cN))
\end{equation}
where $\cN$ is the obstruction bundle (see~\cite{fulton}, Cor.~8.1.2 and Prop.~6.1(a)). One easily identifies $\cN$ with the restriction of $\oplus_{j=1}^{t} Q_j({t})$ to $\cZ$. It follows that the restrictions to $U$ of the left and right hand sides of~\eqref{eccesso} are equal. The proposition follows because the  dimension of $(X^{t}\setminus U)=\Delta^{t}(X)\cap (E/V)^{t}$ is equal to $(\dim X-1)$, which is strictly smaller than $\dim X$.
\end{proof}
\begin{crl}\label{crl:eccesso}
Keep notation and assumptions as above. Let $I\subset\{1,\ldots,(n+1)\}$ be non-empty and $I^c:=(\{1,\ldots,(n+1)\}\setminus I)$.  Let $Q_j$ denote $Q_j(n+1)$ and let ${t}:=| I |$. Then
\begin{equation}
\scriptstyle
(f^{n+1})^{*}\Delta_I(Y;b)=
\begin{cases}
\scriptstyle \Delta_I(X;a) & \scriptstyle \text{if $| I |=1$,} \\
\scriptstyle \Delta_I(X;a)+\beta_{n+1,*}((g^{n+1})^{*}\Delta_I(V;b)\cdot c_{d({t})}\left(\bigoplus\limits_{j\in I} Q_j\right)\cdot \prod_{j\in I^c} c_{e-1}(Q_j)) & \scriptstyle\text{if $| I |> 1$.}
\end{cases}
\end{equation}
\end{crl}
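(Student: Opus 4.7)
The plan is to reduce the corollary to the small-diagonal case treated in \Ref{prp}{eccesso} via a K\"unneth-type factorization, supplemented by one standard excess-intersection computation. After permuting coordinates so that positions in $I$ come first, there is an isomorphism $Y^{n+1}\cong Y^t\times Y^{n+1-t}$ under which
\[
\Delta_I(Y;b)=\Delta^t(Y)\boxtimes\{(b,\ldots,b)\},\qquad \Delta_I(X;a)=\Delta^t(X)\boxtimes\{(a,\ldots,a)\},
\]
and similarly $(g^{n+1})^{*}\Delta_I(V;b)=(g^t)^{*}\Delta^t(V)\boxtimes (g^{n+1-t})^{*}\{(b,\ldots,b)\}$. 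Since $f^{n+1}=f^t\times f^{n+1-t}$ under this identification, pullback distributes over the external product, giving
\[
(f^{n+1})^{*}\Delta_I(Y;b)=\bigl((f^t)^{*}\Delta^t(Y)\bigr)\boxtimes\prod_{j\in I^c}f^{*}\{b\}.
\]

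Next I would feed \Ref{prp}{eccesso} into the first factor and separately identify $f^{*}\{b\}$ via the excess intersection formula applied to the fiber square with vertical arrows $f\colon X\to Y$ and $g^{-1}(b)=\PP^{e-1}\to\{b\}$. A routine $K$-theoretic computation based on the short exact sequences $0\to N_{\PP^{e-1}/E}\to N_{\PP^{e-1}/X}\to N_{E/X}|_{\PP^{e-1}}\to 0$ and $0\to\cO_E(-1)\to g^{*}N_{V/Y}\to Q\to 0$ identifies the excess bundle on $\PP^{e-1}$ with $Q|_{\PP^{e-1}}$. Using the factorization $\PP^{e-1}\hookrightarrow E\hookrightarrow X$ and the projection formula, this yields
\[
f^{*}\{b\}=\beta_{1,*}\bigl(c_{e-1}(Q)\cdot g^{*}\{b\}\bigr).
\]
Moreover $c_{e-1}(Q|_{\PP^{e-1}})$ represents a point of the rational variety $\PP^{e-1}$, and all points of $\PP^{e-1}$ are rationally equivalent in $X$; since $a\in\PP^{e-1}$ by hypothesis, this class is simply $[a]$ in $\CH(X)$.

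Substituting back and expanding the external product, the term $\Delta^t(X)\boxtimes\prod_{j\in I^c}[a]$ is exactly $\Delta_I(X;a)$. This settles the case $|I|=1$, where \Ref{prp}{eccesso} contributes no further term. For $|I|>1$, the second summand of \Ref{prp}{eccesso} combined externally with $\prod_{j\in I^c}f^{*}\{b\}=\beta_{n+1-t,*}\bigl((g^{n+1-t})^{*}\{(b,\ldots,b)\}\cdot\prod_{j\in I^c}c_{e-1}(Q_j)\bigr)$ reassembles into a single pushforward along $\beta_{n+1}$ via the standard identity $\beta_{t,*}(\alpha)\boxtimes\beta_{n+1-t,*}(\gamma)=\beta_{n+1,*}(\alpha\boxtimes\gamma)$ for closed embeddings, together with $(g^t)^{*}\Delta^t(V)\boxtimes (g^{n+1-t})^{*}\{(b,\ldots,b)\}=(g^{n+1})^{*}\Delta_I(V;b)$, yielding the stated formula. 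The main obstacle is purely bookkeeping: keeping track of which $Q_j$'s come from $I$- versus $I^c$-projections and ensuring the Chern class degrees match under the reindexing from $E^t$ to $E^{n+1}$; the only geometric input beyond \Ref{prp}{eccesso} is the excess intersection computation for the preimage of the point $\{b\}$.
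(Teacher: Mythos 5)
Your proposal is correct and follows essentially the same route as the paper: factor $(f^{n+1})^{*}\Delta_I(Y;b)$ through the isomorphism $X^{n+1}\cong X^{t}\times X^{n+1-t}$ determined by $I$, apply \Ref{prp}{eccesso} to the diagonal factor, and compute $(f^{n+1-t})^{*}\{(b,\ldots,b)\}$ by an excess-intersection (obstruction bundle) argument before reassembling the pushforwards along $\beta_{n+1}$. Your explicit identification of the excess class on $\PP^{e-1}$ with the point class $[a]$ is a detail the paper leaves implicit, but it is the same argument.
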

\begin{proof}
For $1\le i\le(n+1)$  let $\rho_{i}\colon X^{n+1}\to X$ be the $i$-th projection. Let  $J=\{j_1,\ldots,j_m\}$ where $1\le j_1<\ldots < j_{t}\le(n+1)$, in particular ${t}=|J|$. We let $\pi_{J}\colon X^{n+1}\to X^{t}$ be the map such that the composition of the $i$-th projection $X^{t}\to X$ with $\pi_J$ is equal to $\rho_{j_i}$. The two maps $\pi_I\colon X^{n+1}\to X^{{t}}$ and $\pi_{I^c}\colon X^{n+1}\to  X^{n+1-{t}}$ define an isomorphism $\Lambda_I\colon X^{n+1}\overset{\sim}{\lra} X^{t}\times X^{n+1-{t}}$. We have
\begin{equation}
(f^{n+1})^{*}\Delta_I(Y;b)=\Lambda_I^{*}((f^{t})^{*}\Delta^{t}(Y)\times (f^{n+1-{t}})^{*}(\{\underbrace{(b,\ldots,b)}_{n+1-{t}}\})).
\end{equation}
(Here $\times$ denotes the exterior product of cycles, see 1.10 of~\cite{fulton}.) An obstruction bundle computation gives that 
\begin{equation}
(f^{n+1-{t}})^{*}(\{\underbrace{(b,\ldots,b)}_{n+1-{t}}\})=\beta_{n+1-{t},*}\left(\prod_{1\le j\le (n+1-{t})} c_{e-1}(Q_j(n+1-{t})\right)
\end{equation}
The corollary follows from the above equations and~\Ref{prp}{eccesso}. 
\end{proof}
Let $I\subset\{1,\ldots,(n+1)\}$ be non-empty and let $t:=| I |$. We let $\Omega_I\in \CH_{\dim X}(E^{n+1})$ be given by 
\begin{equation}\label{eccomega}
\scriptstyle
\Omega_I:=
\begin{cases}
\scriptstyle 0 & \scriptstyle \text{if $| I |=1$,} \\
\scriptstyle (g^{n+1})^{*}\Delta_I(V;b)\cdot c_{d({t})}\left(\bigoplus\limits_{j\in I} Q_j\right)\cdot \prod_{j\in I^c} c_{e-1}(Q_j) & \scriptstyle\text{if $| I |> 1$.}
\end{cases}
\end{equation}
By~\Ref{crl}{eccesso} we have $(f^{n+1})^{*}\Delta_I(Y;b)= \Delta_I(X;a)+\beta_{n+1,*}(\Omega_I)$ and hence
\begin{equation}\label{solgam}
(f^{n+1})^{*}(\Gamma^{n+1}(Y;b))=\Gamma^{n+1}(X;a)+\beta_{n+1,*}\left(\sum_{1\le| I |\le(n+1)}(-1)^{n+1-| I |}\Omega_I\right).
\end{equation}
\subsection{The proof.}
\setcounter{equation}{0}
By~\eqref{solgam} it suffices to prove that the following equality holds in $\CH_{\dim X}(E^{n+1})_{\QQ}$:
\begin{equation}\label{altome}
\sum_{1\le| I |\le(n+1)}(-1)^{| I |}\Omega_I=0.
\end{equation}
Let $I\subset\{1,\ldots,(n+1)\}$ be of cardinality strictly greater than $(n-e)$: \Ref{crl}{delsup}  allows us to express the class of $\Delta_I(V;b)$ as a linear combination of the $\Delta_J(V;b)$'s with $J\subset I$  of cardinality at most $(n-e)$. Moreover Whitney's formula allows us to write the Chern class appearing in the definition of $\Omega_I$ as a sum of products of Chern classes of the $Q_j$'s.  
It follows that for each $I\subset\{1,\ldots,(n+1)\}$ we may express the class of $\Omega_I$ as a 
linear combination of the classes
\begin{equation}
(g^{n+1})^{*}\Delta_J(V;b)\cdot \prod_{s=1}^{n+1}c_{k_s}(Q_s),\quad 1\le |J|\le(n-e),\quad k_1+\ldots+k_{n+1}=d(n+1)=n(e-1)-1.
\end{equation}
\begin{dfn}
$\cP_n(e)$ is the set of $(n+1)$-tuples $k_1,\ldots,k_{n+1}$ of natural numbers $0\le  k_s\le (e-1)$ whose sum equals $d(n+1)$.
\end{dfn}
Summing over all $I\subset\{1,\ldots,(n+1)\}$  of a given cardinality  $t$ we get the following.
\begin{clm}
Let $1\le t\le(n+1)$. There exists an integer $c_{J,K}(t)$ for each  couple $(J,K)$ with $\es\not=J\subset\{1,\ldots,(n+1)\}$  of cardinality at most $(n-e)$ and $K\in\cP_n(e)$   such  that
\begin{equation}
\sum_{| I |=t}\Omega_I=\sum_{\substack{1\le | J |\le (n-e) \\ K\in\cP_n(e)}}c_{J,K}(t)(g^{n+1})^{*}\Delta_J(V;b)\cdot \prod_{s=1}^{n+1}c_{k_s}(Q_s).
\end{equation}
\end{clm}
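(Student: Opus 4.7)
The plan is to reduce each summand $\Omega_I$ (with $|I|=t$) to an integer linear combination of the basic classes $(g^{n+1})^{*}\Delta_J(V;b)\cdot\prod_{s=1}^{n+1} c_{k_s}(Q_s)$ with $1\le|J|\le n-e$ and $K\in\cP_n(e)$, and then swap the two summations. For the diagonal factor I would distinguish three cases. If $|I|=1$ then $\Omega_I=0$ by~\eqref{eccomega} and there is nothing to prove. If $2\le|I|\le n-e$ then $\Delta_I(V;b)$ is already a basic term with $J=I$ of admissible cardinality. If $|I|>n-e$ then the hypothesis $\Gamma^{n-e+1}(V;b)\equiv 0$, together with $|I|\ge n-e+1$, lets me invoke~\Ref{crl}{delsup} on $V$ with $m=n-e+1$: it expresses $\Delta_I(V;b)$ as an integer linear combination (with signed binomial coefficients) of $\Delta_J(V;b)$'s with $J\subset I$ and $1\le|J|\le n-e$.

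For the Chern-class factor $c_{d(t)}\bigl(\bigoplus_{j\in I}Q_j\bigr)\cdot\prod_{j\in I^c}c_{e-1}(Q_j)$, Whitney's formula expands the first piece as $\sum\prod_{j\in I}c_{k_j}(Q_j)$, summed with coefficient $1$ over tuples $(k_j)_{j\in I}$ with $\sum_{j\in I}k_j=d(t)$ and $0\le k_j$; the rank bound $\rk Q_j=e-1$ then forces $k_j\le e-1$. Multiplying by $\prod_{j\in I^c}c_{e-1}(Q_j)$ yields monomials $\prod_{s=1}^{n+1}c_{k_s}(Q_s)$ with $k_s=e-1$ for $s\in I^c$, with all $k_s\in\{0,\ldots,e-1\}$, and total degree $d(t)+(n+1-t)(e-1)=n(e-1)-1=d(n+1)$. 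Hence each resulting tuple $K$ lies in $\cP_n(e)$.

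Combining these two reductions for each $I$ and summing over the $I$'s with $|I|=t$, I would interchange the order of summation so that the outer index becomes the pair $(J,K)$. The coefficient $c_{J,K}(t)$ is then the sum, taken over those $I\supset J$ with $|I|=t$ and $k_s=e-1$ whenever $s\notin I$, of the signed binomial coefficient contributed by~\Ref{crl}{delsup} (or of $1$ in the regime $t\le n-e$, where no reduction is needed). Since both ingredients produce integer coefficients, so does the final sum. The argument is purely formal; the only points worth checking are the degree count and rank bound that keep the Chern-class expansion inside $\cP_n(e)$, and the integrality of the coefficients coming from~\Ref{crl}{delsup} and Whitney, both of which are transparent. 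There is therefore no genuine obstacle to the claim.
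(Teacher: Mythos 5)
Your proposal is correct and matches the paper's argument: the paper likewise invokes \Ref{crl}{delsup} (with $m=n-e+1$, justified by the hypothesis $\Gamma^{n-e+1}(V;b)\equiv 0$) to reduce $\Delta_I(V;b)$ to the $\Delta_J(V;b)$ with $1\le|J|\le n-e$, expands the Chern class via Whitney's formula, and sums over the $I$ of cardinality $t$. Your extra checks (the degree count landing in $d(n+1)$, the rank bound forcing $k_s\le e-1$, and integrality) are exactly the points the paper leaves implicit.
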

It will be convenient to set $c_{J,K}(0)=0$.  We will prove that  
\begin{equation}\label{incredibile}
\sum_{t=0}^{n+1}(-1)^{t} c_{J,K}(t)=0.
\end{equation}
That will prove Equation~\eqref{altome} and hence also~\Ref{prp}{blowdel}. Applying~\Ref{crl}{delsup} to $(V,b)$ we get the following result.
\begin{clm}\label{clm:ritrito}
Let $I\subset\{1,\ldots,n+1\}$ be of cardinality $t\ge (n+1-e)$. Then
\begin{equation}
\Delta^{n+1}_I(V;b)\equiv \sum_{\substack{J\subset I \\ 1\le |J|\le(n-e)}}(-1)^{n-e-|J|}{t-|J| -1\choose t-n-1+e}\Delta^{n+1}_J(Y;b).
\end{equation}
\end{clm}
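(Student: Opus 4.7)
The plan is to recognize \Ref{clm}{ritrito} as a direct specialization of Corollary~\Ref{crl}{delsup} to the pair $(V,b)$. The hypotheses of \Ref{prp}{blowdel} grant that $\Gamma^{n-e+1}(V;b)\equiv 0$, which is exactly the vanishing needed to invoke \Ref{crl}{delsup} with the integer parameter $m:=n-e+1$.

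Next I would choose $s:=e$ so that $m+s=n+1$, placing the resulting cycles inside $V^{n+1}$ (the ambient space that will enter the subsequent argument through the pullback $(g^{n+1})^{*}$ in the definition of $\Omega_I$). The corollary's cardinality hypothesis $|I|\ge m$ then reads $t\ge n+1-e$, matching the hypothesis of the claim. It remains to translate the output: the summation range $1\le|J|\le m-1$ becomes $1\le|J|\le n-e$; the sign $(-1)^{m-1-|J|}$ becomes $(-1)^{n-e-|J|}$; and the binomial coefficient $\binom{|I|-|J|-1}{|I|-m}$ becomes $\binom{t-|J|-1}{t-n-1+e}$. Substituting these identifications into the conclusion of \Ref{crl}{delsup} reproduces the displayed formula term for term.

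One bookkeeping point has to be flagged. The application of \Ref{crl}{delsup} to $(V,b)$ produces, on the right-hand side, cycles of the form $\Delta^{n+1}_J(V;b)\subset V^{n+1}$, which are $\dim V$-dimensional. The symbol $\Delta^{n+1}_J(Y;b)$ appearing in the statement must therefore be read as $\Delta^{n+1}_J(V;b)$: a literal equality of a $\dim V$-cycle on $V^{n+1}$ with a $\dim Y$-cycle on $Y^{n+1}$ is impossible for dimensional reasons, and moreover the cycles needed for the next step of the blow-up argument (to rewrite $\sum_{|I|=t}\Omega_I$ in terms of a canonical basis indexed by subsets of cardinality at most $n-e$) all live naturally on $V^{n+1}$. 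With this reading understood, there is no remaining obstacle: the claim is proved by a single invocation of \Ref{crl}{delsup}, and no auxiliary combinatorial identity is required beyond the one embedded in that corollary.
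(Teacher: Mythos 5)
Your proof is correct and is exactly the paper's argument: the paper introduces this Claim with the single sentence ``Applying Corollary~\ref{crl:delsup} to $(V,b)$ we get the following result,'' which is precisely your specialization $m=n-e+1$, $s=e$. Your observation that $\Delta^{n+1}_J(Y;b)$ in the display is a typo for $\Delta^{n+1}_J(V;b)$ is also right.
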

Given $K\in\cP_n(e)$ we let
\begin{equation}
T(K):=\{1\le i\le(n+1) \mid k_i=(e-1)\}.
\end{equation}
A simple computation gives that
\begin{equation}\label{tikapineq}
(n+1-e)\le | T(K)|. 
\end{equation}
\begin{prp}
Let $\es\not=J\subset\{1,\ldots,(n+1)\}$ be of cardinality at most $(n-e)$, let $K\in\cP_n(e)$ and $0\le t\le(n+1)$. Then
\begin{equation}\label{cigeikap}
c_{J,K}(t)=(-1)^{n-|J|-e}{t-|J| -1\choose n-|J|-e}{|T(K)\cap J^c|\choose n+1-t}
\end{equation}
\end{prp}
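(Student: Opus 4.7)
The plan is to compute $\sum_{|I|=t}\Omega_I$ explicitly by (i) expanding the Chern class $c_{d(t)}(\bigoplus_{j\in I}Q_j)$ via Whitney's formula, (ii) applying \Ref{clm}{ritrito} to rewrite each $\Delta_I^{n+1}(V;b)$ in terms of the $\Delta_J^{n+1}(V;b)$ with $1\le|J|\le(n-e)$, and (iii) carrying out a combinatorial count of the subsets $I\subset\{1,\ldots,n+1\}$ of cardinality $t$ that contribute to a fixed basis term $(g^{n+1})^{*}\Delta_J(V;b)\cdot\prod_{s}c_{k_s}(Q_s)$.

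First, by Whitney's formula applied to $\bigoplus_{j\in I}Q_j$ (each $Q_j$ having rank $e-1$), the product $c_{d(t)}(\bigoplus_{j\in I}Q_j)\cdot\prod_{j\in I^c}c_{e-1}(Q_j)$ expands as a sum of monomials $\prod_{s=1}^{n+1}c_{\ell_s}(Q_s)$ with $0\le\ell_s\le e-1$ and $\sum_s\ell_s=d(n+1)$. For the monomial to match a given $K\in\cP_n(e)$, the entries in $I^c$ are forced to satisfy $k_j=e-1$, that is $I^c\subset T(K)$ (equivalently $T(K)^c\subset I$); the remaining constraint $\sum_{j\in I}k_j=d(t)$ is then automatic from $\sum_s k_s=d(n+1)$. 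Hence each $\Omega_I$ contributes $(g^{n+1})^{*}\Delta_I(V;b)\cdot\prod_s c_{k_s}(Q_s)$ with coefficient $1$ precisely when $T(K)^c\subset I$, and $0$ otherwise.

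Next, for $t\ge n-e+1$ the hypothesis $\Gamma^{n-e+1}(V;b)\equiv 0$ together with \Ref{clm}{ritrito} gives
\[
\Delta^{n+1}_I(V;b)\equiv\sum_{\substack{J\subset I\\1\le|J|\le n-e}}(-1)^{n-e-|J|}\binom{t-|J|-1}{t-n-1+e}\Delta^{n+1}_J(V;b),
\]
and for $t\le n-e$ the class $\Delta^{n+1}_I(V;b)$ is itself a basis element. Collecting the coefficient of $(g^{n+1})^{*}\Delta_J(V;b)\cdot\prod_s c_{k_s}(Q_s)$ in $\sum_{|I|=t}\Omega_I$ then amounts to counting the subsets $I\subset\{1,\ldots,n+1\}$ with $|I|=t$ containing $J\cup T(K)^c$. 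Using the inclusion--exclusion identity $|J\cup T(K)^c|=n+1-|T(K)\cap J^c|$, this count is
\[
\binom{n+1-|J\cup T(K)^c|}{t-|J\cup T(K)^c|}=\binom{|T(K)\cap J^c|}{n+1-t}.
\]
Multiplying by the coefficient $(-1)^{n-e-|J|}\binom{t-|J|-1}{t-n-1+e}=(-1)^{n-|J|-e}\binom{t-|J|-1}{n-|J|-e}$ (using the symmetry of binomial coefficients) yields the claimed expression for $c_{J,K}(t)$.

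Finally, I would verify that the asserted formula absorbs all the boundary cases into the combinatorial convention $\binom{-1}{k}=(-1)^k$ and the usual vanishing of $\binom{a}{b}$ when $0\le a<b$: for $|J|=t\le n-e$ (the ``already in the basis'' range) the factor $\binom{-1}{n-|J|-e}$ cancels the sign to give $1$; for $|J|<t\le n-e$ one has $\binom{t-|J|-1}{n-|J|-e}=0$; for $|J|>t$ one has $|T(K)\cap J^c|\le|J^c|<n+1-t$, so $\binom{|T(K)\cap J^c|}{n+1-t}=0$; and for $t=1$ the bound $|T(K)|\le n-1$ (forced by $K\in\cP_n(e)$ with $e\ge 2$) makes $\binom{|T(K)\cap J^c|}{n}$ vanish, matching $c_{J,K}(1)=0$ (since $\Omega_I=0$ for singleton $I$). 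The main obstacle is this last step: one must carefully track the two cases $t\le n-e$ and $t\ge n-e+1$ and check that the formula dovetails correctly at the boundary, which is a purely combinatorial verification using the conventions above.
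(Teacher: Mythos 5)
Your proposal is correct and follows essentially the same route as the paper: expand the Chern class by Whitney's formula so that $\Omega_I$ contributes to the monomial indexed by $K$ exactly when $T(K)^c\subset I$, rewrite $\Delta_I(V;b)$ via \Ref{clm}{ritrito}, and count the admissible $I\supset J\cup T(K)^c$ of cardinality $t$ to get $\binom{|T(K)\cap J^c|}{n+1-t}$. The paper organizes this as two ranges ($t\le n-e$, where $\Delta_I(V;b)$ is already a basis element, and $t\ge n+1-e$, where the counting formula applies) and checks that the closed formula reproduces both, which is precisely your boundary-case verification.
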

\begin{proof}
Suppose first that $0\le t\le(n-e)$. Then $c_{J,K}(t)=0$ unless $|J|=t$ and $J^c\subset T(K)$: if the latter holds then $c_{J,K}(t)=1$. Assume that the right-hand side of~\eqref{cigeikap} 
is non-zero: then the first binomal  coefficient is non-zero and hence $t\le|J|$. Of course also the second binomal  coefficient is non-zero: it follows that
\begin{equation}
(n+1-t)\le | T(K)\cap J^c| \le |J^c|=n+1-|J|.
\end{equation}
Since $t\le|J|$ it follows that $|J|=t$ and hence $| T(K)\cap J^c| = |J^c|$ i.e.~$J^c\subset T(K)$: a straightforward computation gives that under these assumptions the right-hand side of~\eqref{cigeikap} equals $1$. 
It remains to prove that~\eqref{cigeikap} holds for $(n+1-e)\le t\le(n+1)$. Looking at~\eqref{eccomega} and~\Ref{clm}{ritrito} we get that
\begin{equation}\label{lucca}
c_{J,K}(t)=(-1)^{n-e-|J|}{t-|J| -1\choose t-n-1+e}|\{I\subset\{1,\ldots,(n+1)\} \mid  I^c\subset(T(K)\cap J), \quad |I|=t\}|.
\end{equation}
Since the right-hand side of~\eqref{lucca} is equal to the right-hand side of~\eqref{cigeikap} this finishes the proof.
\end{proof}
Let
\begin{equation}
p(x):={n-|J| -x\choose n-|J|-e}.
\end{equation}
Then $\deg p<|T(K)\cap J^c|$ because  $\deg p=(n-|J|-e)$ and because~\eqref{tikapineq} gives that 
\begin{equation}
|T(K)\cap J^c|\ge (n+1-e)+(n+1-|J|)-(n+1)=n-|J|-e+1.
\end{equation}
Thus~\eqref{combcomb} and~\eqref{cigeikap}    give that
\begin{equation}
\scriptstyle
0=\sum_{s=0}^{n+1}(-1)^s p(s) {|T(K)\cap J^c|\choose s} =(-1)^{n+1}\sum_{t=0}^{n+1}(-1)^t {t-|J| -1\choose n-|J|-e}{|T(K)\cap J^c|\choose n+1-t}=
(-1)^{1-e-|J|}\sum_{t=0}^{n+1}(-1)^t c_{J,K}(t).
\end{equation}
This finishes the prooof of~\Ref{prp}{blowdel}.
\qed
\subsection{Application to Hilbert schemes of $K3$'s}
Let $S$ be a complex $K3$ surface. By Beauville and Voisin~\cite{beauvoisin} there exists $c\in S$ such that $\Gamma^3(S;c)\equiv 0$. We let $S^{[n]}$ be the Hilbert scheme parametrizing length-$n$ subschemes of $S$;  Beauville~\cite{beau} proved that $S^{[n]}$ is a hyperk\"ahler variety. 
\begin{prp}\label{prp:diaghilbk3}
Keep notation as above and assume that $n=2,3$. Let $a_n\in S^{[n]}$  represent a scheme supported at $c$. Then $\Gamma^{2n+1}(S^{[n]};a_n)\equiv 0$. 
\end{prp}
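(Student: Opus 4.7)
\emph{Case $n=2$.} My plan is to realize a finite cover of $S^{[2]}$ as an iterated blow-up built from $S$, so that Propositions~\Ref{prp}{protokunn} and~\Ref{prp}{blowdel} together with Subsection~\Ref{subsec}{gradofin} propagate $\Gamma^3(S;c)\equiv 0$ to the desired vanishing. Let $\wt X:=\mathrm{Bl}_{\Delta_S}(S\times S)$; there is a natural morphism $q\colon\wt X\to S^{[2]}$ of degree $2$ (the composition $\wt X\to S\times S\to S^{(2)}$ factors through the Hilbert--Chow morphism via the universal property), so by Subsection~\Ref{subsec}{gradofin} it will suffice to find $\alpha\in\wt X$ with $q(\alpha)=a_2$ and $\Gamma^5(\wt X;\alpha)\equiv 0$. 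First, Proposition~\Ref{prp}{protokunn} applied to $\Gamma^3(S;c)\equiv 0$ gives $\Gamma^5(S\times S;(c,c))\equiv 0$. Then Proposition~\Ref{prp}{blowdel} applied to the blow-up $\wt X\to S\times S$, with $V=\Delta_S\cong S$, $e=2$, $n=4$, $b=(c,c)$, yields $\Gamma^5(\wt X;\alpha)\equiv 0$ for any $\alpha$ above $(c,c)$. Since $a_2$ is nonreduced and supported at $c$, it lies in the exceptional locus of $S^{[2]}\to S^{(2)}$; its unique preimage $\alpha$ under $q$ then lies in the exceptional divisor of $\wt X\to S\times S$ over $(c,c)$, completing this case.

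\emph{Case $n=3$.} My plan uses the nested Hilbert scheme $S^{[2,3]}:=\{(\xi_2,\xi_3)\mid \xi_2\subset\xi_3\}$ together with two classical facts: the forgetful map $S^{[2,3]}\to S^{[3]}$, $(\xi_2,\xi_3)\mapsto\xi_3$, has generic degree $3$ and hence finite nonzero degree; and the residue map $\psi\colon S^{[2,3]}\to S^{[2]}\times S$, $(\xi_2,\xi_3)\mapsto(\xi_2,\supp(\xi_3/\xi_2))$, realizes $S^{[2,3]}$ as the blow-up of $S^{[2]}\times S$ along the universal subscheme $Z_2\subset S^{[2]}\times S$, which has codimension $2$. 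By the first fact and Subsection~\Ref{subsec}{gradofin}, I reduce to finding $\alpha\in S^{[2,3]}$ above $a_3$ with $\Gamma^7(S^{[2,3]};\alpha)\equiv 0$. For this I will apply Proposition~\Ref{prp}{blowdel} to $\psi$, with $Y=S^{[2]}\times S$, $V=Z_2$, $e=2$, $n=6$, $b=(a_2,c)\in Z_2$. The first hypothesis $\Gamma^7(Y;b)\equiv 0$ follows from the $n=2$ case $\Gamma^5(S^{[2]};a_2)\equiv 0$ combined with $\Gamma^3(S;c)\equiv 0$ via Proposition~\Ref{prp}{protokunn} (since $5+3-1=7$). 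The second hypothesis $\Gamma^5(Z_2;b)\equiv 0$ follows from the classical isomorphism $Z_2\cong\wt X$ (the universal degree-$2$ cover of $S^{[2]}$ being $\mathrm{Bl}_{\Delta_S}(S\times S)$): under this isomorphism $(a_2,c)$ corresponds to a point in the exceptional divisor of $\wt X\to S\times S$ over $(c,c)$, and the vanishing was already established in the $n=2$ argument. Choosing $\alpha\in S^{[2,3]}$ in the exceptional divisor of $\psi$ above $(a_2,c)$, i.e., corresponding to a pair $(a_2,\xi_3)$ with $\xi_3\supset a_2$ supported at $c$ (so $\xi_3$ is some $a_3$), will then finish the proof.

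\emph{Where the work lies.} No new cycle-theoretic input is needed beyond Propositions~\Ref{prp}{protokunn}, \Ref{prp}{blowdel}, and Subsection~\Ref{subsec}{gradofin}: the content of the proof is the input of the two blow-up descriptions $S^{[2,3]}\cong\mathrm{Bl}_{Z_2}(S^{[2]}\times S)$ (Ellingsrud and Str\o mme) and $Z_2\cong\mathrm{Bl}_{\Delta_S}(S\times S)$, and the book\-keep\-ing required to check that at each step the base point in the blow-up proposition lies in the correct exceptional divisor so that the ensuing finite maps send it to the prescribed $a_n$. The main obstruction to pushing the same scheme to $n\geq 4$ is that it would call for analogous vanishings on the higher universal families $Z_k$ and the nested schemes $S^{[k-1,k]}$, which are not directly available from the present toolbox.
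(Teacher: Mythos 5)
Your proposal is correct and follows essentially the same route as the paper: the $n=2$ case via the blow-up of the diagonal in $S\times S$ with \Ref{prp}{protokunn}, \Ref{prp}{blowdel} and \Ref{subsec}{gradofin}, and the $n=3$ case via the degree-$3$ cover of $S^{[3]}$ by the blow-up of $S^{[2]}\times S$ along the tautological subscheme $\cZ_2$ (your nested Hilbert scheme $S^{[2,3]}$), using the identification of $\cZ_2$ with the blow-up of the diagonal to supply the hypothesis $\Gamma^5(\cZ_2;(a_2,c))\equiv 0$. The paper's proof is the same argument, including the closing remark that the singularity of $\cZ_n$ for $n\ge 3$ blocks the induction.
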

\begin{proof}
First assume that $n=2$. Let $\pi_1\colon X\to S\times S$ be the blow-up of the diagonal $\Delta$ and $\rho_2\colon X\to S^{(2)}$ the composition of $\pi_1$ and the quotient map $S\times S\to S^{(2)}$. There is a  degree-$2$ map $\phi_2\colon X\to S^{[2]}$ fitting into a commutative diagram
\begin{equation}\label{equivoci}
\xymatrix{ X \ar^{\phi_2}[r]  \ar^{\rho_2}[dr]  &   S^{[2]} \ar^{\gamma_2}[d] \\
  &  S^{(2)}  } 
\end{equation}
where $\gamma_2([Z])=\sum_{p\in S}\ell(\cO_Z,p)$ is the Hilbert-Chow morphism. Let $x\in X$ such that $\phi_2(x)=a_2$; by~\Ref{subsec}{gradofin} it suffices to prove that $\Gamma^5(X;x)\equiv 0$. By commutativity of~\eqref{equivoci} we have $\pi_1(x)=(c,c)$. Now $\Gamma^5(S\times S;(c,c))\equiv 0$ by~\Ref{prp}{protokunn}, and since $\cod(\Delta,S\times S)=2$ it follows from~\Ref{prp}{blowdel} that $\Gamma^5(X;x)\equiv 0$. Next assume that $n=3$. Let  $\pi_2\colon Y\to S^{[2]}\times S$ be the blow-up with center  the tautological subscheme  $\cZ_2\subset S^{[2]}\times S$ and  $\rho_3\colon Y\to S^{(3)}$ the composition of $\pi_2$ and the natural map $S^{[2]}\times S\to S^{(3)}$. There is a  degree-$3$   map  $\phi_3\colon Y\to S^{[3]}$ fitting into a commutative diagram
\begin{equation}\label{commedia}
\xymatrix{ Y \ar^{\phi_3}[r]  \ar^{\rho_3}[dr]  &   S^{[3]} \ar^{\gamma_3}[d] \\
  &  S^{(3)}  } 
\end{equation}
where $\gamma_3$ is the Hilbert-Chow morphism. (See for example Proposition~2.2 of~\cite{ellstrom}.) On the other hand let $p_1\colon S\times S\to S$ be projection to the first factor;   the map 
\begin{equation}\label{nicolapiazza}
(\phi_2,p_1\circ \pi_1))\colon X\to S^{[2]}\times S 
\end{equation}
is an isomorphism onto $\cZ_2$. Let $y\in Y$ be such that $\phi_3(y)=a_3$; by~\Ref{subsec}{gradofin} it suffices to prove that $\Gamma^7(Y;y)\equiv 0$. Notice that $\pi_2(y)=(a_2,c)$ where $a_2\in S^{[2]}$ is supported at $c$. By the case $n=2$ (that we just proved) and~\Ref{prp}{protokunn} we have $\Gamma^7(S^{[2]}\times S;(a_2,c))\equiv 0$.  Let $x\in X$ such that $\phi_2(x)=a_2$. In the proof for the case $n=2$ we showed that $\Gamma^5(X;x)\equiv 0$; since~\eqref{nicolapiazza} is an isomorphism it follows that  $\Gamma^5(\cZ_2;(a_2,c))\equiv 0$. Since  $\Gamma^7(S^{[2]}\times S;(a_2,c))\equiv 0$ and $\cZ_2$ is smooth of codimension $2$, we get  
$\Gamma^7(Y;y)\equiv 0$ by~\Ref{prp}{blowdel}.
\end{proof}
 Let $\cZ_n\subset S^{[n]}\times S$ be the tautological subscheme. The blow-up of $S^{[n]}\times S$ with center $\cZ_n$ has a natural regular  map of finite (non-zero) degree  to $S^{[n+1]}$ and in turn $\cZ_n$ may be described starting from  the tautological subscheme  $\cZ_{n-1}\subset S^{[n-1]}\times S$. Thus one may hope to prove by induction on $n$ that  $\Gamma^{2n+1}(S^{[n]};a)\equiv 0$ for any $n$: the problem  is that  starting with $\cZ_{3}$ the tautological subscheme is singular. 
\section{Double covers}\label{sec:rivdop}
In the present section we will assume that $X$ is a   projective variety  over a field $\KK$ and that $\iota\in\Aut(X)$ is a (non-trivial) involution. We let $Y:=X/\la\iota\ra$  and 
$f\colon X\to Y$ be the quotient map. 
We  assume that there exists $a\in X(\KK)$ which is fixed by $\iota$ and we let $b:=f(a)$.  
\begin{cnj}\label{cnj:raddoppia}
Keep hypotheses and notation as above and  
suppose that  $\Gamma^{m}(Y;b)\equiv 0$.  Then  $\Gamma^{2m-1}(X;a)\equiv 0$. 
\end{cnj}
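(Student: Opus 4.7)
The plan is to generalise Gross--Schoen's argument for $m=2$ (hyperelliptic curves) by pulling back from $Y^n$ where $n:=2m-1$ and combining the resulting relations with the involution action on $X^n$.

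First I would compute the pullback $f^{n,*}\Delta^n_I(Y;b)$ for each nonempty $I\subset\{1,\ldots,n\}$. Since $a$ is a ramification point, $f^{*}[b]=2[a]$ in $\CH_0(X)$; and the $|I|$-fold fibered product $X\times_Y\cdots\times_Y X$ decomposes generically into $2^{|I|-1}$ components indexed by the unordered partitions $I=K\sqcup K^c$. The expected formula is
\begin{equation*}
f^{n,*}\Delta^n_I(Y;b)=2^{n-|I|}\sum_{[K]}\Delta^n_{I,K}(X;a),
\end{equation*}
where $\Delta^n_{I,K}(X;a)$ is the ``twisted diagonal'' consisting of $(x_1,\ldots,x_n)$ with $x_i=x_j$ when $i,j$ lie in the same part of the partition of $I$, $x_i=\iota(x_j)$ when they lie in opposite parts, and $x_i=a$ for $i\notin I$ (note $\iota(a)=a$). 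This extends the identity $f^{*}\Delta_Y=\Delta_X+\Gamma_\iota$ familiar from $n=2$.

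Next, by~\Ref{prp}{stabile} the hypothesis $\Gamma^m(Y;b)\equiv 0$ implies $\Gamma^k(Y;b)\equiv 0$ for every $k\ge m$, in particular for $k=n$. Pulling this back via $f^n$ produces the master relation
\begin{equation*}
\sum_{\es\neq I\subset\{1,\ldots,n\}}(-2)^{n-|I|}\sum_{[K]}\Delta^n_{I,K}(X;a)\equiv 0,
\end{equation*}
while the target $\Gamma^n(X;a)=\sum_{I}(-1)^{n-|I|}\Delta^n_{I,I}(X;a)$ is supported only on the \emph{trivial} partitions $K=I$, with coefficients $(-1)^{n-|I|}$ rather than $(-2)^{n-|I|}$. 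To separate these out, I would generate many further relations by: (i) pulling back $\Gamma^k(Y;b)\equiv 0$ for each $m\le k\le n$ along the various projections $\pi_J\colon X^n\to X^k$ combined with insertion of $a$ on the complementary factors, which produces analogous identities living on coordinate subsets; and (ii) exploiting the group $G=\{1,\iota\}^n$ acting factor-wise on $X^n$: applying $g\in G$ to any of these relations permutes the twisted diagonals $\Delta^n_{I,K}$ in a controlled way and, equivalently, decomposes $\CH(X^n)_{\QQ}$ into the $2^n$ characters of $G$.

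Finally, one assembles $\Gamma^n(X;a)$ as an explicit $\QQ$-linear combination of the relations above. Concretely, I would work one $G$-character $\chi$ at a time: the trivial character component of $\Gamma^n(X;a)$ ought to coincide, up to a power of $2$, with $f^{n,*}\Gamma^n(Y;b)\equiv 0$; for each nontrivial $\chi$ (necessarily with $\chi(\iota,\ldots,\iota)=+1$, since $\Gamma^n(X;a)$ is fixed by the full diagonal $\iota$), the $\chi$-component of each $\Delta^n_{I,I}$ involves only partitions with even intersection data with the support of $\chi$, and one tries to express it as a pulled-back $\Gamma^k(Y;b)$-relation on a sub-factor $X^k$. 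The vanishing of the resulting combinatorial sum reduces, as in \Ref{subsec}{coeffbin}, to identities of the form $\sum_t(-1)^tp(t)\binom{n}{t}=0$ for suitable binomial polynomials $p(t)$.

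The main obstacle is precisely this final combinatorial step. For $m=2$ it is Gross--Schoen's Proposition~4.8 of~\cite{groscho}; for $m=3$ it is carried out below by an ad hoc calculation exploiting the fact that the relevant twisted diagonals form a small enough list. For $m\ge 4$ the number of partitions $[K]$ and the dimension of each $G$-isotypic component grow rapidly, and the author explicitly flags this linear algebra as the reason the conjecture is only stated, not proved, in general.
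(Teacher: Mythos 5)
Your overall framework---pull back vanishing modified diagonals from powers of $Y$, obtain linear relations among twisted diagonals on $X^{2m-1}$, and then solve a linear system to isolate $\Gamma^{2m-1}(X;a)$---is the same as the paper's, and your pullback formula for $f^{n,*}\Delta^n_I(Y;b)$ is consistent with the computation~\eqref{tirodiag}. But the specific system of relations you propose cannot suffice, for a structural reason. Every relation you generate is invariant under the group $G=\langle\iota\rangle^{2m-1}$ acting coordinate-wise on $X^{2m-1}$: the master relation $(f^{2m-1})^{*}\Gamma^{2m-1}(Y;b)$ is pulled back from the quotient $Y^{2m-1}=X^{2m-1}/G$, hence $G$-invariant, and the relations obtained from the projections $\pi_J$ with $a$ inserted on the complementary factors are products of $G$-invariant factors (recall $\iota(a)=a$). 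In particular your step~(ii) is vacuous: applying $g\in G$ to any of these relations returns the same relation, so the group action generates nothing new, and every $\QQ$-linear combination of your relations is a $G$-invariant cycle lying in the trivial character. The target $\Gamma^{2m-1}(X;a)$ is \emph{not} $G$-invariant: the small diagonal $\{(x,\ldots,x)\}$ occurs in it with coefficient $1$, while the twisted diagonal $\{(x,\ldots,x,\iota(x))\}$ in its $G$-orbit occurs with coefficient $0$, and these are distinct subvarieties because $\iota$ is non-trivial. Hence no cycle-level identity $\Gamma^{2m-1}(X;a)=\sum_i c_iR_i$ with your $R_i$ can hold; at best you could prove that the $G$-average of $\Gamma^{2m-1}(X;a)$ is rationally trivial, and the non-trivial isotypic components---which already for $m=2$ carry the whole Gross--Schoen content, living in the $\iota$-anti-invariant part of $\CH_0(X)_{\QQ}$---are untouched. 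Your plan to handle a non-trivial character $\chi$ ``by a pulled-back $\Gamma^k(Y;b)$-relation on a sub-factor'' runs into exactly this wall: such a relation has trivial $\chi$-component.

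The missing ingredient, which is the paper's key move in~\Ref{subsec}{insintesi}, is a supply of relations that are \emph{not} $G$-invariant. The paper pulls $\Gamma^m(Y;b)$ back only to $X^m$, obtaining $\Xi_m$, and then pushes $\Xi_m$ into $X^{2m-1}$ along the embeddings $j_{\nu}$ that append $m-1$ coordinates each equal to $a$, to some $x_i$, \emph{or to some $\iota(x_i)$}; the choice between $x_i$ and $\iota(x_i)$ in the appended slots is what breaks the $G$-symmetry. For $m=2$ the combination actually used is $-2\Phi_a(\Xi_2)+2\Phi_{x_1}(\Xi_2)-\Phi_{\iota(x_1)}(\Xi_2)$, in which $\Phi_{x_1}$ and $\Phi_{\iota(x_1)}$ enter with \emph{different} coefficients---precisely the non-invariant input your scheme cannot produce. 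To repair your proposal you would need to enlarge your family of relations by allowing the inserted coordinates on the complementary factors of $\pi_J$ to be $x_i$ or $\iota(x_i)$ rather than only $a$; at that point you essentially recover the paper's $\Phi_{\nu}(\Xi_m)$, and the remaining difficulty is the linear algebra that the paper carries out only for $m=2,3$.
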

The above conjecture was proved for $m=2$ by Gross and Schoen, see Prop.~4.8 of~\cite{groscho}. We will propose a proof of~\Ref{cnj}{raddoppia} and we will show that the proof works for $m=2,3$. Of course the proof for $m=2$ is that of  Gross and Schoen (with the  symmetric cube of the curve replaced by the  cartesian cube).
\subsection{A modest proposal}\label{subsec:insintesi}
\setcounter{equation}{0}
 There is a well-defined pull-back homomorphisms 
\begin{equation}
(f^q)^{*}\colon Z_{*}(Y^q)_{\QQ}\to Z_{*}(X^q)_{\QQ}
\end{equation}
compatible with rational equivalence (see Ex.~1.7.6 of~\cite{fulton}): thus we have an   induced   homomorphism $(f^q)^{*}\colon\CH_{*}(Y^q)_{\QQ}\to \CH_{*}(X^q)_{\QQ}$. Let $n:=\dim X$  and $\Xi_m\in Z_{n}(X^m)_{\QQ}$ the cycle defined by 
\begin{equation}\label{dramper}
\Xi_m:=(f^m)^{*}\Gamma^m(Y;b).
\end{equation}
We will show that  $\Xi_m$ is a linear combination of cycles  of the type 
\begin{equation}\label{tipi}
\{(x,\ldots,\iota(x),\ldots x,\ldots,x,a,\ldots\iota(x),\ldots,a,\ldots) \mid x\in X\}.
\end{equation}
Notice that the $\Delta_I(X;a)$'s are of this type. 
Consider the inclusions of $X^m$ in $X^{2m-1}$ which map $(x_1,\ldots,x_{m})$ to $(x_1,\ldots,x_{m},\nu(1),\ldots,\nu(m-1))$ where $\nu\colon\{1,\ldots,(m-1)\}\to\{a,x_1,\ldots,x_{m},\iota(x_1),\ldots,\iota(x_{m})\}$ is an arbitrary list. 
Let $\Phi_{\nu}(\Xi_m)$ be the symmetrized image of  $\Xi_m$ in $Z_{n}(X^{2m-1})$ for the inclusion determined by $\nu$: it is a  linear combination of cycles~\eqref{tipi}.
 By hypothesis $\Xi_m\equiv 0$ and hence any linear combination of the  cycles  $\Phi_{\nu}(\Xi_m)$ is rationally equivalent to $0$. One gets the proof if a suitable linear combination of  the $\Phi_{\nu}(\Xi_m)$'s is a linear combination of  the $\Delta_I(X;a)$'s with the appropriate coefficients (so that it is equal to a non-zero multiple of $\Gamma^{2m-1}(X;a)$). We will carry out the proof for $m=2,3$. 
\subsection{Preliminaries}\label{subsec:giocomega}
\setcounter{equation}{0}
Since the involution of $X$ is non-trivial the dimension of $X$ is strictly positive i.e.~$n>0$. Let  $\mu\colon\{1,\ldots,q\}\to\{a,x,\iota(x)\}$. If $\mu$ is \emph{not} the sequence $\mu(1)=\ldots=\mu(q)=a$ we let
\begin{equation}
\Omega(\mu(1),\ldots,\mu(q)):=\{(x_1,\ldots,x_{q})\in X^{q} \mid x_i=\mu(i),\quad x\in X\},
\end{equation}
 and we let $\Omega(a,\ldots,a):=0$. 
Thus  $\Omega(\mu(1),\ldots,\mu(d))$ is  an $n$-cycle on $X^{d}$. 
For example $\Omega(x,\ldots,x)\in X^{q}$ is the small diagonal.   Let $\cS_{q}$ be the symmetric group on $\{ 1,\ldots,q\}$: of course it acts on $X^{q}$. For $r+s+t=q$ let
\begin{equation}
\ov{\Omega}(r,s,t):=\sum_{\sigma\in\cS_{q}}\sigma(\Omega(\underbrace{a,\ldots,a}_{r},\underbrace{x,\ldots,x}_{s},
\underbrace{\iota(x),\ldots,\iota(x)}_{t})).
\end{equation}
Thus $\ov{\Omega}(r,s,t)$ is an $n$-cycle on $X^q$  invariant under the action of $\cS_{q}$.  Notice that 
\begin{equation}\label{esseti}
\ov{\Omega}(r,s,t)=\ov{\Omega}(r,t,s). 
\end{equation}
With this notation
\begin{equation}\label{altdiag}
\Gamma^q(X;a)=\sum_{\substack{ 0\le r,s \\ r+s=q}} \frac{(-1)^r}{r! s!}\ov{\Omega}(r,s,0).
\end{equation}
Let $\Xi_m$ be the cycle on $X^m$ given by~\eqref{dramper}. A straightforward computation gives that
\begin{equation}\label{tirodiag}
2\Xi_m=\sum_{\substack{ 0\le r,s,t \\ r+s+t=m}}\frac{(-2)^r}{r! s! t!}\ov{\Omega}(r,s,t).
\end{equation}
(Equality~\eqref{esseti} is the reason for the factor of $2$ in front of $\Xi_m$.)  
For  
\begin{equation*}
\nu\colon\{1,\ldots,(m-1)\}\to\{a,x_1,\ldots,x_{m},\iota(x_1),\ldots,\iota(x_{m})\}
\end{equation*}
we let
\begin{equation}
\begin{matrix}
X^{m}& \overset{j_{\nu}}\lra &  X^{2m-1} \\
(x_1,\ldots,x_{m}) & \mapsto & (x_1,\ldots,x_{m},\nu(1),\ldots,\nu(m-1))
\end{matrix}
\end{equation}
and $\Phi_{\nu}\colon Z_n(X^{m})\to Z_n(X^{2m-1})$ be the homomorphism 
\begin{equation}
\Phi_{\nu}(\gamma):=\sum_{\sigma\in\cS_{2m-1}}\sigma_{*}(j_{\nu,*}(\gamma)).
\end{equation}
Notice that $\Phi_{\nu}$ does not change if we reorder the sequence $\nu$.
\subsection{The case $m=2$}\label{subsec:emmedue}
\setcounter{equation}{0}
 A straightforward computation (recall~\eqref{esseti}) gives  that
\begin{eqnarray}
\Phi_a(\Xi_2) & = & \ov{\Omega}(1,2,0)-4\ov{\Omega}(2,1,0)+\ov{\Omega}(1,1,1), \\
\Phi_{x_1}(\Xi_2) & = & \ov{\Omega}(0,3,0)-2\ov{\Omega}(1,2,0)-2\ov{\Omega}(2,1,0)+\ov{\Omega}(0,2,1), \\
\Phi_{\iota(x_1)}(\Xi_2) & = & -2\ov{\Omega}(2,1,0)-2\ov{\Omega}(1,1,1)+2\ov{\Omega}(0,2,1). 
\end{eqnarray}
Thus
\begin{equation}
0\equiv -2\Phi_a(\Xi_2)+2\Phi_{x_1}(\Xi_2)-\Phi_{\iota(x_1)}(\Xi_2)=2\ov{\Omega}(0,3,0)-6\ov{\Omega}(1,2,0)+6\ov{\Omega}(2,1,0)=12\Gamma^3(X;a).
\end{equation}
\subsection{The case $m=3$}\label{subsec:emmetre}
\setcounter{equation}{0}
For every  $\nu\colon\{1,2\}\to \{a,x_1,x_2,x_3,\iota(x_1),\iota(x_2),\iota(x_3)\}$ the cycle $\Phi_\nu(\Xi_3)$ is equal to the linear combination of the classes listed in the first column of Table~\eqref{coordinate}  with  coefficients  the numbers in the corresponding column   of Table~\eqref{coordinate}. For such a $\nu$ let $i(\nu)$ be its position in the first row of Table~\eqref{coordinate}: thus $i((a,a))=1$,..., $i((\iota(x_1),\iota(x_2))=9$. Table~\eqref{coordinate} allows us to rewrite 
\begin{equation}\label{califano}
\sum_{\nu} \lambda_{i(\nu)}\Phi_{\nu}(\Xi_3)
\end{equation}
 as an integral linear combination  of the classes listed in the first column of Table~\eqref{coordinate}, with coefficients $F_1,\ldots,F_9$ which are linear functions of $\lambda_1,\ldots,\lambda_9$. Let's impose that $0=F_1=\ldots=F_6$: solving the corresponding  linear system we 
get that
 \begin{eqnarray}
\lambda_1 & = & \frac{1}{3}(-8\lambda_6-2\lambda_7-8\lambda_8-8\lambda_9),\\
\lambda_2 & = & \frac{1}{3}(14\lambda_6+8\lambda_7+14\lambda_8+20\lambda_9),\\
\lambda_3 & = & \frac{1}{3}(-6\lambda_6-6\lambda_7-6\lambda_8-12\lambda_9),\\
\lambda_4 & = & \frac{1}{3}(\lambda_6-2\lambda_7+\lambda_8+4\lambda_9),\\
\lambda_5 & = & \frac{1}{3}(-5\lambda_6-2\lambda_7-5\lambda_8-8\lambda_9).
\end{eqnarray}
For such a choice of coefficients $\lambda_1,\ldots,\lambda_9$ we have that
\begin{equation}\label{pasqua}
%
0\equiv \sum\limits_{\nu} \lambda_{i(\nu)}\Phi_{\nu}(\Xi)=
-\frac{4}{3}(\lambda_6+\lambda_7+\lambda_8+\lambda_9)(\ov{\Omega}(0,5,0)-5\ov{\Omega}(1,4,0)+10\ov{\Omega}(2,3,0)
-10\ov{\Omega}(3,2,0)+5\ov{\Omega}(4,1,0)).
\end{equation}
Choosing integers $\lambda_6,\ldots,\lambda_9$  such that $(\lambda_6+\lambda_7+\lambda_8+\lambda_9)=-3$ we get that
\begin{equation}\label{eccoci}
0\equiv \sum\limits_{\nu} \lambda_{i(\nu)}\Phi_{\nu}(\Xi)=4\cdot 5!\Gamma^5(X;a).
\end{equation}
This concludes the proof of~\Ref{cnj}{raddoppia} for $m=3$.
\begin{table}[tbp]\tiny
\caption{Coordinates of $\Phi_{\nu}(\Xi)$ for  $\nu=(a,a),\ldots,(\iota(x_1),\iota(x_2))$.}\label{coordinate}
\vskip 1mm
\centering
\renewcommand{\arraystretch}{1.60}
\begin{tabular}{rrrrrrrrrr}
\toprule
& $ (a,a)$  & $ (a,x_1)$   &  $ (a,\iota(x_1))$    &  $(x_1,x_1)$  &  $(x_1,x_2)$   &  
$(x_1,\iota(x_1))$  &  $(x_1,\iota(x_2))$    &  $ (\iota(x_1),\iota(x_1))$  & $ (\iota(x_1),\iota(x_2))$     \\
\midrule
$\ov{\Omega}(3,1,1)$ & -6 & -2 & 2 & -2 & 0 & -2 & 4 & -2 & 8    \\
 \midrule
$\ov{\Omega}(2,2,1)$   & 3  & -4 & -8 & 0 & -4 & 4 & -6 & 4 & -8      \\
  \midrule
$\ov{\Omega}(1,3,1)$    & 0 & 2 & 2  & -4 & 0 & -4 & -4 & -4 & 0     \\
 \midrule
$\ov{\Omega}(1,2,2)$    & 0 & 1 & 2 & 0 &-2  &-4  & 0 & -4 & -4       \\
\midrule
$\ov{\Omega}(0,4,1)$    & 0 & 0 & 0 & 2 & 1 &1  & 2 & 1 & 0     \\
\midrule
$\ov{\Omega}(0,3,2)$     & 0 & 0 & 0 & 1 & 2 &3  & 2 & 3 & 4         \\
\toprule
$\ov{\Omega}(0,5,0)$     & 0 & 0 & 0 & 1 & 1 & 0 & 0 & 0 & 0        \\
\midrule
$\ov{\Omega}(1,4,0)$    & 0 & 1 & 0 & -4 & -2 & 0 & 0 & 0 & 0      \\
\midrule
$\ov{\Omega}(2,3,0)$    & 1 & -4 & 0 & 4 & -4 & 0 & -2 & 0 & 0     \\
\midrule
$\ov{\Omega}(3,2,0)$    & -6 & 2 & -2 & -2 & 8 & -2 & 4 & -2 & 0       \\
\midrule
$\ov{\Omega}(4,1,0)$      & 12 & 8 & 8  & 8 & 4 & 8 & 4 & 8 & 4         \\
\bottomrule 
\end{tabular}
\end{table} 
\end{document}